\documentclass[11pt, reqno]{amsart}
\usepackage{indentfirst}
\usepackage{amssymb, amsmath, amsthm}
\usepackage{mathrsfs}
\usepackage{enumerate}
\usepackage{graphicx}
\usepackage{float}  
\usepackage{xcolor}
\usepackage[colorlinks=true,linkcolor=purple, citecolor=blue,urlcolor=magenta]{hyperref}
\newtheorem{theo}{Theorem}[section]
\newtheorem{lem}{Lemma}[section]

\newtheorem{defi}{Definition}[section]

\newcommand{\ol}{\overline}

\numberwithin{equation}{section}

\begin{document}

\title[Inverse Logarithmic Coefficients, Differences, Hankel Determinant, and ....]{Inverse Logarithmic Coefficients, Differences, Hankel Determinant, and Fekete--Szeg\"{o} Functionals for the Class $\mathcal{C}_e$}
\date{}

\author[P. Das and N. Sarkar]{Pradip Das and Nabadwip Sarkar}

\address{Department of Mathematics, Raiganj University, Raiganj, West Bengal-733134, India.}
\email{pradipsmath@gmail.com}

\address{Amity School of Applied Sciences, Amity University Mumbai, Panvel, Navi Mumbai, Maharashtra-410206, India.}
\email{naba.iitbmath@gmail.com}

\renewcommand{\thefootnote}{}
\footnote{2020 \emph{Mathematics Subject Classification}: 30C45, 30C50, 30C55}
\footnote{\emph{Key words and phrases}: Univalent functions, Inverse Logarithmic coefficients, Coefficient difference, Hankel determinant, Fekete-Szeg\"{o} functionals}

\renewcommand{\thefootnote}{\arabic{footnote}}
\setcounter{footnote}{0}

\begin{abstract}
In this paper, we investigate the inverse logarithmic coefficients associated with the class $\mathcal{C}_e$ of analytic and univalent functions satisfying the subordination condition
\[
1+\frac{z f''(z)}{f'(z)} \prec e^z,
\quad z\in\mathbb{D}.
\]
If $F_{f^{-1}}(w) = \log\!\left(\frac{f^{-1}(w)}{w}\right) = 2\sum_{n=1}^{\infty}\Gamma_n w^n$ denotes the logarithmic expansion corresponding to the inverse function $f^{-1}$, then we establish sharp estimates for the initial inverse logarithmic coefficients and prove that
\[
|\Gamma_n|
\le
\frac{1}{2n(n+1)},
\qquad n=1,2,3.
\]
We further derive the sharp coefficient-difference inequality
\[
-\frac{1}{2\sqrt7}
\le
|\Gamma_2|-|\Gamma_1|
\le
\frac1{12},
\]
and obtain the sharp bound for the second-order Hankel determinant associated with the inverse logarithmic coefficients:
\[
\left|
H_{2,1}\!\left(F_{f^{-1}}/2\right)
\right|
\le
\frac{85}{12096}.
\]

Additionally, we evaluate the sharp lower and upper bounds of the generalized Fekete--Szeg\"{o} functional $F_{\lambda, \mu}(f) = \big| a_3(f) - \lambda a_2(f)^2 \big| - \mu |a_2(f)|$ within this setting and establish relationships associated with the starlike class $\mathcal{S}^{\ast}_{\rho}$.

The extremal functions corresponding to all obtained estimates are explicitly constructed, thereby showing the sharpness of the results.
\end{abstract}

\maketitle

\section{{\bf Introduction}}
Let $\mathcal{H}$ denote the class of analytic functions in the unit disk
\[
\mathbb{D} := \{ z \in \mathbb{C} : |z| < 1 \}.
\]
The space $\mathcal{H}$ is a locally convex topological vector space endowed with the topology of uniform convergence on compact subsets of $\mathbb{D}$.

Let $\mathcal{A}$ be the subclass of $\mathcal{H}$ consisting of functions normalized by
\[
f(0) = 0 \quad \text{and} \quad f'(0) = 1.
\]
Further, let $\mathcal{S}$ denote the class of functions $f \in \mathcal{A}$ that are univalent in $\mathbb{D}$. Each function $f \in \mathcal{S}$ has the Taylor series expansion
\begin{equation}\label{eq1}
f(z) = z + \sum_{n=2}^{\infty} a_n z^n.
\end{equation}

A function $f \in \mathcal{A}$ is called starlike (respectively, convex) if the image domain $f(\mathbb{D})$ is starlike with respect to the origin (respectively, convex). Denote by $\mathcal{S}^*$ and $\mathcal{C}$ the subclasses of $\mathcal{S}$ consisting of starlike and convex functions, respectively.

It is well known that a function $f \in \mathcal{A}$ belongs to $\mathcal{S}^*$ if and only if
\[
\Re\!\left(\frac{z f'(z)}{f(z)}\right) > 0, \qquad z \in \mathbb{D},
\]
and $f \in \mathcal{A}$ belongs to $\mathcal{C}$ if and only if
\[
\Re\!\left(1 + \frac{z f''(z)}{f'(z)}\right) > 0, \qquad z \in \mathbb{D}.
\]
Moreover, $f \in \mathcal{C}$ if and only if $z f'(z) \in \mathcal{S}^*$.

\medskip

We now recall the concept of subordination, which is an essential tool in geometric function theory.

\begin{defi}
Let $\Omega$ denote the class of analytic functions $\omega$ in $\mathbb{D}$ satisfying $\omega(0)=0$ and $|\omega(z)|<1$ for all $z \in \mathbb{D}$. Functions in $\Omega$ are called \emph{Schwarz functions}. Each $\omega \in \Omega$ has the power series representation
\[
\omega(z) = \sum_{n=1}^{\infty} \omega_n z^n, \qquad z \in \mathbb{D}.
\]

For analytic functions $f$ and $g$ in $\mathbb{D}$, we say that $f$ is \emph{subordinate} to $g$, written $f \prec g$, if there exists a Schwarz function $\omega \in \Omega$ such that
\[
f(z) = g(\omega(z)), \qquad z \in \mathbb{D}.
\]
In particular, if $g$ is univalent in $\mathbb{D}$, then $f \prec g$ if and only if $f(0)=g(0)$ and $f(\mathbb{D}) \subset g(\mathbb{D})$.
\end{defi}

Using the principle of subordination, Ma and Minda \cite{MM} introduced in 1992 a unified framework for several subclasses of univalent functions. They defined
\[
\mathcal{C}(\psi) := \left\{ f \in \mathcal{S} : 1 + \frac{z f''(z)}{f'(z)} \prec \psi(z), \; z \in \mathbb{D} \right\},
\]
where $\psi$ is an analytic univalent function in $\mathbb{D}$ with positive real part, symmetric with respect to the real axis, normalized by $\psi(0)=1$ and $\psi'(0)>0$.

Recently, significant attention has been given to subclasses of starlike functions for which the superordinate function $\psi$ does not map the entire right half-plane. Although the exponential function is a natural candidate for $\psi$, its use leads to interesting and nontrivial analytical difficulties.

The class $\mathcal{C}_e$ associated with the exponential function is defined by the condition
\[
1 + \frac{z f''(z)}{f'(z)} \prec e^z.
\]
More precisely,
\begin{equation}\label{maineqn}
\mathcal{C}_e := \left\{ f \in \mathcal{S} : 1 + \frac{z f''(z)}{f'(z)} \prec e^z, \; z \in \mathbb{D} \right\}.
\end{equation}

\subsection{Logarithmic inverse coefficients}

For $f \in \mathcal{S}$, the logarithmic inverse coefficients $\Gamma_n$, introduced by Ponnusamy et al. \cite{Ponnusamy2018}, are defined via the inverse function $f^{-1}$ as
\[
F_{f^{-1}}(w) := \log \frac{f^{-1}(w)}{w}
= 2 \sum_{n=1}^{\infty} \Gamma_n w^n,
\qquad |w| < \frac{1}{4}.
\]

The first four coefficients are given by
\begin{equation}\label{IG1}
\begin{cases}
\Gamma_1 = -\dfrac{1}{2} a_2, \\[6pt]
\Gamma_2 = -\dfrac{1}{2} a_3 + \dfrac{3}{4} a_2^2, \\[6pt]
\Gamma_3 = -\dfrac{1}{2} \left( a_4 - 4a_2 a_3 + \dfrac{10}{3} a_2^3 \right), \\[6pt]
\Gamma_4 = \dfrac{35}{8} a_2^4 - \dfrac{15}{2} a_2^2 a_3 + \dfrac{5}{2} a_2 a_4 + \dfrac{5}{4} a_3^2 - \dfrac{1}{2} a_5.
\end{cases}
\end{equation}

Ponnusamy et al. \cite{Ponnusamy2018} proved that for $f \in \mathcal{S}$,
\[
|\Gamma_n| \leq \frac{1}{2n} \binom{2n}{n}, \qquad n \in \mathbb{N},
\]
with equality attained only by the Koebe function and its rotations. They also obtained sharp bounds for the initial logarithmic inverse coefficients for several important geometric subclasses of $\mathcal{S}$.

\medskip

Let $\mathcal{P}$ denote the class of analytic functions $p$ in $\mathbb{D}$ satisfying
\[
p(0)=1 \quad \text{and} \quad \Re p(z) > 0, \qquad z \in \mathbb{D}.
\]
Each function $p \in \mathcal{P}$ admits the expansion
\begin{equation}\label{p1}
p(z) = 1 + \sum_{n=1}^{\infty} c_n z^n, \qquad z \in \mathbb{D}.
\end{equation}

Functions in $\mathcal{P}$ are known as \emph{Carath\'eodory functions}. It is well known that for $p \in \mathcal{P}$, the sharp estimate $|c_n| \leq 2$ holds for all $n \geq 1$ (see \cite{PLD1}). The Carath\'eodory class $\mathcal{P}$ and its coefficient bounds play a fundamental role in obtaining sharp estimates in geometric function theory.

We now present several auxiliary lemmas that will be used in establishing our main results.

\section{{\bf Auxiliary lemmas}}
Now we recall the following well-known result due to Cho et al. \cite{C12}.
\begin{lem}\label{L1} \cite[Lemma 2.4]{C12} If $p\in\mathcal{P}$ is of the form (\ref{p1}), then
\begin{equation}
\label{c1}c_1 =2\tau_1,\end{equation}
\begin{equation}\label{c2} c_2=2\tau_1^2 + 2(1 - |\tau_1|^2)\tau_2\end{equation}
and
\begin{equation}
\label{c3} c_3 = 2\tau_1^3+4(1-|\tau_1|^2)\tau_1\tau_2 - 2(1 - |\tau_1|^2)\ol{\tau_1}\tau_2^2 + 2(1 - \tau_1^2)(1 - |\tau_2|^2)\tau_3
\end{equation}
for some $\tau_1, \tau_2, \tau_3 \in\overline{\mathbb D}:= \{z \in \mathbb{C}: |z| \leq 1 \}$.
For  $ \tau_1 \in \mathbb{T} := \{ z \in \mathbb{C} : |z| = 1 \} $ , there is a unique function  $  p \in \mathcal{P} $  with  $ c_1 $  as in (\ref{c1}), namely,
\[
p(z) = \frac{1 + \tau_1 z}{1 - \tau_1 z}, \quad z \in \mathbb{D}.
\]

For  $ \tau_1 \in \mathbb{D} $  and  $ \tau_2 \in \mathbb{T} $ , there is a unique function  $ p \in \mathcal{P} $  with  $ c_1 $  and  $ c_2 $  as in (\ref{c1}) and (\ref{c2}), namely,
\[
p(z) = \frac{1 + (\ol \tau_1 \tau_2 + \tau_1) z + \tau_2 z^2}{1 + (\ol \tau_1 \tau_2 - \tau_1) z - \tau_2 z^2}, \quad z \in \mathbb{D}.
\]

For  $ \tau_1, \tau_2 \in \mathbb{D} $  and  $ \tau_3 \in \mathbb{T} $ , there is a unique function  $  p \in \mathcal{P} $  with  $ c_1 $ ,  $ c_2 $ , and  $ c_3 $  as in (\ref{c1}-\ref{c3}), namely,
\[
p(z) = \frac{1 + (\ol\tau_2 \tau_3 + \ol\tau_1 \tau_2 + \tau_1)z+(\ol\tau_1\tau_3+\tau_1\ol\tau_2\tau_3+\tau_2)z^2+\tau_3z^3}{1+(\ol\tau_2\tau_3+\ol\tau_1\tau_2-\tau_1)z+(\ol\tau_1\tau_3-\tau_1\ol\tau_2\tau_3-\tau_2)z^2-\tau_3z^3},\;\;z\in\mathbb{D}
\]
\end{lem}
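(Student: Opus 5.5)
The plan is to derive the parametrization from the Schur algorithm applied to $p$, i.e. to the Schwarz function $\omega=(p-1)/(p+1)$. Write
\[
\omega(z)=\frac{p(z)-1}{p(z)+1}=\sum_{n\ge1}\omega_n z^n .
\]
Since $\omega(0)=0$ and $|\omega|<1$, Schwarz's lemma gives $\omega(z)=z\,\omega^{(1)}(z)$ with $\omega^{(1)}:\mathbb{D}\to\overline{\mathbb{D}}$ analytic. Set $\tau_1:=\omega^{(1)}(0)=\omega_1$. If $|\tau_1|<1$, define the next Schur iterate
\[
\omega^{(2)}(z)=\frac{1}{z}\,\frac{\omega^{(1)}(z)-\tau_1}{1-\ol{\tau_1}\,\omega^{(1)}(z)},
\]
which again maps $\mathbb{D}$ into $\overline{\mathbb{D}}$ by Schwarz--Pick. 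Put $\tau_2:=\omega^{(2)}(0)$, and repeat once more to obtain $\tau_3$. Inverting the Möbius steps expresses $\omega^{(1)}$ in terms of $\tau_1$ and $\omega^{(2)}$, and $\omega^{(2)}$ in terms of $\tau_2$ and $\omega^{(3)}$. Expanding to second order yields $\omega_1=\tau_1$, $\omega_2=(1-|\tau_1|^2)\tau_2$, and
\[
\omega_3=(1-|\tau_1|^2)\bigl[(1-|\tau_2|^2)\tau_3-\ol{\tau_1}\,\tau_2^2\bigr].
\]

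Next, substitute into $p=(1+\omega)/(1-\omega)=1+2\omega+2\omega^2+2\omega^3+\cdots$ and collect coefficients:
\[
c_1=2\omega_1,\qquad c_2=2\omega_2+2\omega_1^2,\qquad c_3=2\omega_3+4\omega_1\omega_2+2\omega_1^3 .
\]
Inserting the expressions for $\omega_1,\omega_2,\omega_3$ gives exactly (\ref{c1}) and (\ref{c2}), and for $c_3$ the terms $2\tau_1^3$, $4(1-|\tau_1|^2)\tau_1\tau_2$, $-2(1-|\tau_1|^2)\ol{\tau_1}\tau_2^2$, and $2(1-|\tau_1|^2)(1-|\tau_2|^2)\tau_3$. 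The last term should read $(1-|\tau_1|^2)$; the printed $(1-\tau_1^2)$ I would treat as a typo and check against \cite{C12}.

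Degenerate cases, where the algorithm stops, are handled as follows. If $|\tau_1|=1$, then the maximum principle forces $\omega^{(1)}\equiv\tau_1$, so $\omega(z)=\tau_1 z$ and $p=(1+\tau_1z)/(1-\tau_1z)$. This function is unique, and the formulas hold with arbitrary $\tau_2,\tau_3$ because their coefficients vanish. Similarly, if $|\tau_2|=1$, then $\omega^{(2)}\equiv\tau_2$; back-substitution gives $\omega(z)=z(\tau_1+\tau_2 z)/(1+\ol{\tau_1}\tau_2 z)$, and computing $(1+\omega)/(1-\omega)$ gives the stated rational function. The case $\tau_3\in\mathbb{T}$ is treated the same way.

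Uniqueness in each case holds because the prescribed coefficients determine the Schur parameters, and a unimodular parameter determines the whole Schwarz function. The main obstacle is purely bookkeeping. It lies in the third-order expansion of the composed Möbius maps and in simplifying the final rational function for $\tau_3\in\mathbb{T}$ into the displayed numerator/denominator form. I would do this by writing $\omega=zA/B$ with polynomial $A,B$ and using $p=(B+zA)/(B-zA)$.
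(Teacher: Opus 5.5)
The paper gives no proof of this lemma; it is quoted from Cho et al., so there is no in-paper argument to compare with. Your Schur-algorithm derivation is correct. The expansions $\omega_1=\tau_1$, $\omega_2=(1-|\tau_1|^2)\tau_2$ and $\omega_3=(1-|\tau_1|^2)\bigl[(1-|\tau_2|^2)\tau_3-\ol{\tau_1}\tau_2^2\bigr]$ are right, and they yield $c_1,c_2,c_3$ as you state. Your device $\omega=zA/B$ works exactly. With $A=\tau_1+(\tau_2+\tau_1\ol{\tau_2}\tau_3)z+\tau_3z^2$ and $B=1+(\ol{\tau_1}\tau_2+\ol{\tau_2}\tau_3)z+\ol{\tau_1}\tau_3z^2$, the quotient $(B+zA)/(B-zA)$ is precisely the displayed function for $\tau_3\in\mathbb{T}$. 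Uniqueness follows as you say: $(1-|\tau_1|^2)(1-|\tau_2|^2)\neq 0$ lets you read the Schur parameters of any competitor $p$ off $c_1,c_2,c_3$. For the existence half of the three uniqueness clauses, add one line. The inverse step $g\mapsto(\tau+zg)/(1+\ol{\tau}zg)$ with $|\tau|<1$ sends analytic maps $\mathbb{D}\to\overline{\mathbb{D}}$ to self-maps of $\mathbb{D}$, so the back-substituted $\omega$ really is a Schwarz function and $p\in\mathcal{P}$.

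Your typo diagnosis is also right: the original lemma has $2(1-|\tau_1|^2)(1-|\tau_2|^2)\tau_3$. With the printed factor $(1-\tau_1^2)$, the existence assertion still survives. For $|\tau_1|<1$, replace $\tau_3$ by $\frac{1-|\tau_1|^2}{1-\tau_1^2}\tau_3$, which stays in $\overline{\mathbb{D}}$ because $|1-\tau_1^2|\ge 1-|\tau_1|^2$; for $|\tau_1|=1$, take $\tau_3=0$. The last uniqueness clause, however, becomes false for non-real $\tau_1\in\mathbb{D}$, since the third Schur parameter would have modulus $|1-\tau_1^2|/(1-|\tau_1|^2)>1$. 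In this paper the error is harmless, because $\tau_1$ is always rotated into $[0,1]$.

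As for the route, formulas of this type are classically obtained from the Carath\'eodory--Toeplitz determinant conditions (Libera--Z\l otkiewicz for real $c_1\ge 0$, later extended to complex $c_1$). Your Schur-algorithm approach needs no rotation normalization. A single mechanism produces the parametrization, the explicit extremal functions, and the converse: every triple in $\overline{\mathbb{D}}^3$ is realized by some $p\in\mathcal{P}$, for instance by taking $\omega^{(3)}\equiv\tau_3$.
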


\medskip
Following well-known result is due to Choi et al. \cite{CKS1}.
\begin{lem}\label{L2}\cite{CKS1} Let $A$, $B$, $C$  be real numbers and let
\[Y(A, B, C):= \max\limits_{z\in \ol{\mathbb{D}}}\left\lbrace |A+Bz+Cz^2|+1-|z|^2\right\rbrace.\]

\begin{enumerate} 
\item[(i)] If $AC\geq 0$, then
\[Y(A, B, C) =
\begin{cases}
|A|+|B|+|C|, & \text{if}\;\;\; |B|\geq 2(1-|C|), \\
1+|A|+\frac{B^2}{4(1-|C|)}, &\text{if}\;\;\; |B|<2(1-|C|).
\end{cases}
\]
\item[(ii)] If $AC<0$, then 
\[Y(A,B,C)=
\begin{cases}
1-|A|+\frac{B^2}{4(1-|C|)}, &\text{if}\;\;\; -4AC(C^{-2}-1) \leq B^2\; \text{and}\; |B|<2(1-|C|), \\
1+|A|+\frac{B^2}{4(1+|C|)}, &\text{if}\;\;\; B^2<\min\left\{4(1+|C|)^2, -4AC(C^{-2}-1) \right\}, \\
R(A,B,C), &\text{otherwise},
\end{cases}
\]
where
\[R(A,B,C):=
\begin{cases}
|A|+|B|-|C|, & \text{if}\;\;\; |C|(|B|+4|A|) \leq |AB|, \\
-|A|+|B|+|C|, & \text{if}\;\;\; |AB|\leq |C|(|B|-4|A|), \\
(|C|+|A| )\sqrt{1-\frac{B^2}{4AC}}, &\text{otherwise}.
\end{cases}
\]
\end{enumerate} 
\end{lem}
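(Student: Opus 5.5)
The plan is to write $z=re^{i\theta}$ with $0\le r\le 1$ and split the maximization into two steps. First, for fixed $r$, maximize $|A+Bz+Cz^2|$ over $\theta$. Then maximize the resulting function of $r$, plus $1-r^2$, over $[0,1]$. Since $A,B,C$ are real, putting $t=\cos\theta\in[-1,1]$ gives
\[
|A+Bz+Cz^2|^2=A^2+B^2r^2+C^2r^4+2Br(A+Cr^2)\,t+2ACr^2(2t^2-1).
\]
This is a quadratic polynomial $q_r(t)$ in $t$ with leading coefficient $4ACr^2$. The sign of $AC$ therefore decides whether the inner maximum sits at an endpoint $t=\pm1$ or possibly at the vertex. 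This is exactly the dichotomy (i)/(ii) in Lemma \ref{L2}.

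\emph{Case $AC\ge 0$.} Here $q_r$ is convex in $t$, so its maximum on $[-1,1]$ is attained at $t=\pm1$. Choosing the sign to match that of $B(A+Cr^2)$ gives the value $(|A|+|B|r+|C|r^2)^2$, because $A$ and $C$ have the same sign. We are left with maximizing
\[
g(r)=1+|A|+|B|r-(1-|C|)r^2 \quad\text{on } [0,1].
\]
If $|C|\ge1$, or if the vertex $r_0=|B|/(2(1-|C|))$ satisfies $r_0\ge1$, then $g$ is nondecreasing on $[0,1]$. Hence $r=1$ is optimal and the maximum is $|A|+|B|+|C|$. Otherwise the maximum is $g(r_0)=1+|A|+B^2/(4(1-|C|))$, as stated.

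\emph{Case $AC<0$.} Now $q_r$ is concave in $t$, with vertex at
\[
t_0(r)=-\frac{B(A+Cr^2)}{4ACr}.
\]
If $|t_0(r)|\le1$, the inner maximum is the vertex value
\[
q_r(t_0)=(A-Cr^2)^2-\frac{B^2(A+Cr^2)^2}{4AC}=(|A|+|C|r^2)^2\Bigl(1-\frac{B^2}{4AC}\Bigr)+\frac{B^2r^2}{1}\cdot\frac{(A+Cr^2)^2-(A-Cr^2)^2}{4ACr^2}\cdot 0,
\]
which after simplification becomes the expression whose square root at $r=1$ is $(|A|+|C|)\sqrt{1-B^2/(4AC)}$. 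If $|t_0(r)|>1$, the inner maximum is the endpoint value $\bigl||A|-|C|r^2\bigr|+|B|r$ (up to the appropriate sign choice). In each regime I would maximize the outer function $h(r)=\sqrt{\max_t q_r(t)}+1-r^2$ on $[0,1]$.

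The three regimes for the outer maximum are:
\begin{itemize}
\item An interior critical point of the endpoint branch $1-|A|+|B|r-(1-|C|)r^2$, with maximum $1-|A|+B^2/(4(1-|C|))$. Admissibility requires $|B|<2(1-|C|)$ and $|t_0|\ge1$ at the critical point; the latter turns into $-4AC(C^{-2}-1)\le B^2$.
\item An interior critical point of the branch $1+|A|+|B|r-(1+|C|)r^2$, with maximum $1+|A|+B^2/(4(1+|C|))$. This requires $B^2<\min\{4(1+|C|)^2,\,-4AC(C^{-2}-1)\}$.
\item Otherwise, $h$ is monotone and the maximum is attained at $r=1$. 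Evaluating $\max_t q_1(t)$ by comparing the endpoint values $|A|+|B|-|C|$ and $-|A|+|B|+|C|$ with the vertex value, according to whether $t_0(1)$ lies in $[-1,1]$, gives $R(A,B,C)$. Indeed, $|t_0(1)|\ge1$ is equivalent to $|C|(|B|+4|A|)\le|AB|$ or $|AB|\le|C|(|B|-4|A|)$, depending on the sign.
\end{itemize}

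The main obstacle is the case $AC<0$. There the inner maximizer switches between the vertex and an endpoint as $r$ varies, so $h$ is only piecewise smooth. One must check carefully that the vertex branch never yields an interior maximum for $r<1$, and that the conditions for the three regimes are mutually consistent and exhaust all possibilities. I would first treat the vertex branch. I would try to show that $r\mapsto (|A|+|C|r^2)\sqrt{1-B^2/(4AC)}\,+1-r^2$, suitably corrected, is increasing whenever $|t_0(r)|\le1$, which forces $r=1$ there. After that, the remaining comparisons reduce to elementary inequalities between the explicit candidate values.
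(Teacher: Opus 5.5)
The paper gives no proof of this lemma; it is quoted from Choi--Kim--Sugawa, so your argument has to stand on its own. Your reduction via $t=\cos\theta$ is sound, and your treatment of case (i) is complete and correct. There is one computational slip in case (ii). Your displayed vertex value drops the term $B^2r^2$ and then removes a compensating term by multiplying it by $0$. The correct computation is $q_r(t_0)=(A-Cr^2)^2+B^2r^2-\frac{B^2(A+Cr^2)^2}{4AC}=(A-Cr^2)^2\bigl(1-\frac{B^2}{4AC}\bigr)$, using $(A+Cr^2)^2-4ACr^2=(A-Cr^2)^2$. So the formula you end up with is right.

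\textbf{The genuine gap is case (ii), which is the whole content of the lemma.} You assert the admissibility conditions and assert that ``otherwise'' $h$ is monotone, and you defer all comparisons. The one concrete mechanism you offer is false. Put $s=\sqrt{1-B^2/(4AC)}$. The vertex branch is $v(r)=(|A|+|C|r^2)s+1-r^2$, so $v'(r)=2r(|C|s-1)$, and a short computation shows $|C|s\ge 1$ is equivalent to $B^2\ge D$, where $D:=-4AC(C^{-2}-1)$. This condition does not depend on $r$, nor on whether $|t_0(r)|\le 1$. Whenever $B^2<D$, $v$ is strictly decreasing on its entire active range.

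For example, take $A=1$, $B=\frac1{10}$, $C=-\frac12$. The vertex is active for $r\in[r_-,1]$ with $r_-\approx 0.05$, and $v$ decreases there, with $v(1)\approx 1.504$. Meanwhile $Y=2+\frac1{600}$, attained at $r=\frac1{30}$ on the branch $e_2(r):=1+|A|+|B|r-(1+|C|)r^2$. So ``show the vertex branch is increasing, which forces $r=1$'' cannot be carried out.

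\textbf{What is missing is the following structure.}
\begin{itemize}
\item Since $|A+Cr^2|/r$ first decreases and then increases, the set $\{r:|t_0(r)|\le 1\}$ is an interval $[r_-,r_+]$ around $\sqrt{|A|/|C|}$.
\item Intersected with $[0,1]$, one has $h=e_2$ on $[0,r_-]$, $h=v$ on $[r_-,r_+]$, and $h=e_1$ on $[r_+,1]$, where $e_1(r):=1-|A|+|B|r-(1-|C|)r^2$.
\item $h\ge\max(e_1,e_2)$ everywhere on $[0,1]$.
\item Being affine in $r^2$, $v$ is maximized at an end of its interval. There it coincides with $e_1$ or $e_2$, unless that end is $r=1$.
\end{itemize}
Hence $Y=\max\{\max_{[0,1]}e_1,\ \max_{[0,1]}e_2,\ v(1)\}$, with $v(1)$ counted only when $|t_0(1)|\le 1$.

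The case split then follows from three facts. First, for $|C|<1$,
\[
\Bigl(1-|A|+\frac{B^2}{4(1-|C|)}\Bigr)-\Bigl(1+|A|+\frac{B^2}{4(1+|C|)}\Bigr)=\frac{|C|(B^2-D)}{2(1-C^2)}.
\]
Second, for $B\neq 0$ and $|C|<1$, $r_1:=\frac{|B|}{2(1-|C|)}\ge r_+$ holds iff $B^2\ge D$, and $r_2:=\frac{|B|}{2(1+|C|)}<r_-$ holds iff $B^2<D$. These are your admissibility claims, which you should derive; note that for $e_1$ the requirement is $r_1\ge r_+$, not merely $|t_0(r_1)|\ge 1$. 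Third, the sign of $v'$ as computed above.

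With these in hand, the ``otherwise'' case becomes a proof rather than a claim:
\begin{itemize}
\item If $B^2<D$, one gets $r_->r_2\ge 1$, so $h=e_2$ on all of $[0,1]$.
\item If $B^2\ge D$, each of the three pieces is nondecreasing on its interval.
\end{itemize}
In both sub-cases $h$ is nondecreasing and the maximum sits at $r=1$.
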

\begin{lem}\label{L3} \cite{MM}
Let $p \in \mathcal{P}$ be given by \ref{p1}. Then
\[
\left| c_2 - v c_1^2 \right| \le 
\begin{cases}
-4v + 2, & v < 0, \\
2, & 0 \leq v \leq 1, \\
4v - 2, & v > 1.
\end{cases}
\]

Moreover, for $v < 0$ or $v > 1$, equality holds if and only if
\[
h(z) = \frac{1+z}{1-z} \quad \text{or one of its rotations}.
\]

For $0 < v < 1$, equality holds if and only if
\[
h(z) = \frac{1+z^2}{1-z^2} \quad \text{or one of its rotations}.
\]
\end{lem}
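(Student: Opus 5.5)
The plan is to reduce the functional to the parameters of Lemma~\ref{L1}, where it becomes an elementary one-variable maximization. By \eqref{c1} and \eqref{c2} there are $\tau_1,\tau_2\in\ol{\mathbb{D}}$ with $c_1=2\tau_1$ and $c_2=2\tau_1^2+2(1-|\tau_1|^2)\tau_2$. Substituting gives
\[
c_2-vc_1^2 = 2(1-2v)\tau_1^2 + 2(1-|\tau_1|^2)\tau_2 .
\]

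Write $t=|\tau_1|\in[0,1]$. The triangle inequality together with $|\tau_2|\le 1$ yields
\[
|c_2-vc_1^2| \le 2|1-2v|\,t^2 + 2(1-t^2).
\]
The right-hand side is affine in $s=t^2\in[0,1]$, so its maximum is attained at an endpoint. At $s=1$ it equals $2|1-2v|$, and at $s=0$ it equals $2$. Hence
\[
|c_2-vc_1^2|\le 2\max\{1,|1-2v|\}.
\]
For $v<0$ this is $2-4v$, for $v>1$ it is $4v-2$, and for $0\le v\le 1$ it is $2$, since there $|1-2v|\le 1$. This gives the three cases of the bound.

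For the equality statements we track when every inequality above is an equality.

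\emph{Case $v<0$ or $v>1$.} Here $|1-2v|>1$, so the affine function is strictly maximized only at $s=1$. Equality therefore forces $|\tau_1|=1$. By the uniqueness part of Lemma~\ref{L1}, $p(z)=(1+\tau_1 z)/(1-\tau_1 z)$, which is a rotation of $(1+z)/(1-z)$. Conversely, this function gives $c_1=2\tau_1$ and $c_2=2\tau_1^2$, and a direct check shows it attains the bound.

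\emph{Case $0<v<1$.} Here $|1-2v|<1$, so the maximum is attained only at $s=0$. Equality forces $\tau_1=0$ and, in addition, $|\tau_2|=1$. The second uniqueness statement in Lemma~\ref{L1}, with $\tau_1=0$, then gives $p(z)=(1+\tau_2z^2)/(1-\tau_2z^2)$, a rotation of $(1+z^2)/(1-z^2)$. For this function $c_1=0$ and $|c_2|=2$, so the bound is attained.

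The only delicate point is the equality characterization. It relies on the strictness of $|1-2v|>1$ or $|1-2v|<1$ in the respective ranges; at $v=0,1$ both endpoints give the same value, which is why those values are excluded from the uniqueness claims. It also relies on invoking the uniqueness clauses of Lemma~\ref{L1} rather than merely exhibiting extremal functions.
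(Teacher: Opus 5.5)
Your proof is correct. The paper gives no proof of this lemma; it is quoted from Ma and Minda, so there is no internal argument to compare against. Your route makes the lemma self-contained, using only Lemma~\ref{L1}, which the paper already states.

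In substance it is the classical argument. The identity $c_2-\tfrac12 c_1^2=2(1-|\tau_1|^2)\tau_2$ is exactly the Schwarz-lemma inequality $|\omega_2|\le 1-|\omega_1|^2$ for $\omega=(p-1)/(p+1)$, where $\omega_1=c_1/2$ and $\omega_2=\tfrac12(c_2-\tfrac12 c_1^2)$. Equivalently, it says $|c_2-\tfrac12 c_1^2|\le 2-\tfrac12|c_1|^2$. Adding $(\tfrac12-v)c_1^2$ by the triangle inequality then gives a bound that is affine in $|c_1|^2$, just as you found.

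Your equality analysis also holds up. In the respective ranges, the strict inequality $|1-2v|>1$ forces $|\tau_1|=1$, and $|1-2v|<1$ forces $\tau_1=0$ and $|\tau_2|=1$. The uniqueness clauses of Lemma~\ref{L1} then identify $p$ exactly, including at $v=\tfrac12$. Invoking those clauses, rather than only exhibiting extremal functions, is what justifies the ``only if'' direction. Your explanation of why $v=0,1$ are excluded is accurate: there the affine bound is constant, and the extremal set is a larger family.
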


\begin{lem}\label{L6}\cite{SimThomas2020}
Let $J, K,$ and $L$ be numbers such that $J \geq 0$, $K \in \mathbb{C}$, and $L \in \mathbb{R}$. 
Let $p \in \mathcal{P}$ be of the form (\ref{p1}) and define a function by

\[
\Phi(c_1,c_2) = \big| K c_1^2 + L c_2 \big| - \big| J c_1 \big|.
\]
Then 
\[
\Phi(c_{1}, c_{2}) \le 
\begin{cases}
|4K + 2L| - 2J, & \text{if } |2K + L| \geq |L| + J, \\[6pt]
2|L|, & \text{otherwise.}
\end{cases}
\]
and
\[
-\Phi(c_1,c_2) \leq 
\begin{cases}
2J - M, & \text{when } J \geq M + 2|L|, \\[6pt]
2J \sqrt{\dfrac{ 2|L|}{M + 2|L|}}, & \text{when } J^2 \leq 2|L|(M + 2|L|), \\[10pt]
2|L| +\dfrac{ J^2}{M + 2|L|}, & \text{otherwise}
\end{cases}
\]
where $M=|4K+2L|$.
\end{lem}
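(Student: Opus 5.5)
We prove Lemma \ref{L6} by reducing to a one-variable optimisation through the parametrisation of Lemma \ref{L1}. By that lemma, $c_1=2\tau_1$ and $c_2=2\tau_1^2+2(1-|\tau_1|^2)\tau_2$ with $\tau_1,\tau_2\in\ol{\mathbb D}$.

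\textbf{Reduction to $\tau_1=t\in[0,1]$.} The functional $\Phi$ is invariant under rotations $p(z)\mapsto p(e^{i\theta}z)$, since $c_1\mapsto e^{i\theta}c_1$ and $c_2\mapsto e^{2i\theta}c_2$. Hence we may assume $\tau_1=t\in[0,1]$. Writing $M=|4K+2L|$, we get
\[
Kc_1^2+Lc_2=(4K+2L)t^2+2L(1-t^2)\tau_2 .
\]
Since $\tau_2$ ranges over $\ol{\mathbb D}$, the triangle inequality gives the extremes of the modulus, both attained by a suitable choice of $\tau_2$ (with $|\tau_2|=1$ for the maximum, and $|\tau_2|\le 1$ for the minimum):
\[
\max_{\tau_2}|Kc_1^2+Lc_2| = Mt^2+2|L|(1-t^2),\qquad
\min_{\tau_2}|Kc_1^2+Lc_2| = \max\{0,\;Mt^2-2|L|(1-t^2)\}.
\]

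\textbf{Upper bound for $\Phi$.} It remains to maximise
\[
g(t)=(M-2|L|)t^2-2Jt+2|L|,\qquad t\in[0,1].
\]
If $M\ge 2|L|$, then $g$ is convex, so its maximum is at an endpoint, namely $\max\{g(0),g(1)\}=\max\{2|L|,\,M-2J\}$. If $M<2|L|$, then $g$ is concave with vertex at $t=J/(M-2|L|)\le 0$, so $g$ is decreasing on $[0,1]$ and the maximum is $g(0)=2|L|$. In both cases the maximum equals $\max\{2|L|,M-2J\}$. Since $M-2J\ge 2|L|$ is exactly the condition $|2K+L|\ge |L|+J$, this gives the first estimate.

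\textbf{Upper bound for $-\Phi$.} Here we maximise
\[
k(t)=2Jt-\max\{0,(M+2|L|)t^2-2|L|\},\qquad t\in[0,1].
\]
Put $t_0=\sqrt{2|L|/(M+2|L|)}\le 1$.
\begin{enumerate}
\item[(a)] On $[0,t_0]$ we have $k(t)=2Jt$, which is increasing, so its maximum there is $2Jt_0=2J\sqrt{2|L|/(M+2|L|)}$.
\item[(b)] On $[t_0,1]$ we have $k(t)=2Jt-(M+2|L|)t^2+2|L|$, which is concave with vertex $t^*=J/(M+2|L|)$.
\end{enumerate}
We then split according to the position of $t^*$:
\begin{enumerate}
\item[(i)] If $t^*\le t_0$, equivalently $J^2\le 2|L|(M+2|L|)$, then $k$ is decreasing on $[t_0,1]$ and the value $2Jt_0$ from (a) is the maximum.
\item[(ii)] If $t^*\ge 1$, equivalently $J\ge M+2|L|$, then $k$ is increasing on $[t_0,1]$ and the maximum is $k(1)=2J-M$.
\item[(iii)] Otherwise $t_0<t^*<1$, and the maximum is $k(t^*)=2|L|+J^2/(M+2|L|)$.
\end{enumerate}
These are precisely the three cases of the second estimate.

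\textbf{Sharpness and the main obstacle.} Sharpness follows because every extremal pair $(t,\tau_2)$ found above is realised by a function in $\mathcal P$. By the uniqueness statements of Lemma \ref{L1}, the choices $\tau_1=1$, or $\tau_1=t\in[0,1)$ with $\tau_2\in\ol{\mathbb D}$, correspond to explicit functions in $\mathcal P$. The only delicate point is the minimisation in the $-\Phi$ case. One must correctly use $\min_{|\tau|\le1}|a+b\tau|=\max\{|a|-|b|,0\}$ and then handle the kink of $k$ at $t_0$ carefully, matching the case boundaries to the stated conditions. The upper bound for $\Phi$ is routine by comparison.
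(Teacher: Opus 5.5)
Your proof is correct. The paper gives no proof of this lemma: it quotes it from \cite{SimThomas2020} and uses it as a black box in the sections on $|\Gamma_2|-|\Gamma_1|$ and on $F_{\lambda,\mu}$, so there is no in-paper argument to compare with. You give the standard self-contained derivation. It uses the parametrisation of Lemma~\ref{L1} (equivalently $c_2=\tfrac12\bigl(c_1^2+(4-c_1^2)\zeta\bigr)$ with $|\zeta|\le 1$) and reduces both estimates to one-variable optimisation in $t=|c_1|/2$. The key identifications check out:
\begin{itemize}
\item $M=2|2K+L|$, so $M-2J\ge 2|L|$ is exactly $|2K+L|\ge |L|+J$.
\item For $M+2|L|>0$, the conditions $t^*\le t_0$ and $t^*\ge 1$ are exactly $J^2\le 2|L|(M+2|L|)$ and $J\ge M+2|L|$.
\item $k$ is continuous at the kink $t_0$, so nothing is lost there.
\end{itemize}
You also never use $L\in\mathbb{R}$, so your argument proves the lemma for complex $L$ as well.

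Two minor points.

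First, when $M+2|L|=0$ (that is, $K=L=0$), your $t_0$ is undefined, as are the second and third formulas of the statement. In that case $-\Phi=2Jt\le 2J=2J-M$ directly, and you should set the case aside explicitly.

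Second, sharpness is not part of the statement. If you want it, note that Lemma~\ref{L1} as stated only gives explicit functions for $\tau_1\in\mathbb{T}$, or for $\tau_1\in\mathbb{D}$ with $\tau_2\in\mathbb{T}$, not for arbitrary $\tau_2\in\ol{\mathbb D}$ as your last paragraph suggests. This is still enough. Your extremal values occur either at $t=0$, where only $|\tau_2|=1$ is needed, or at $t\in\{t_0,t^*,1\}$. For $t\ge t_0$ one has $Mt^2\ge 2|L|(1-t^2)$, so the minimum of $|(4K+2L)t^2+2L(1-t^2)\tau_2|$ over $\ol{\mathbb D}$ is attained at a unimodular $\tau_2$.
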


\section{\bf Main Results and Proofs}
The structure of the remainder of this paper is organized as follows. 
Section~3.1 is devoted to deriving sharp bounds for the initial inverse logarithmic coefficients of functions belonging to the class $\mathcal{C}_e$. 
In Section~3.2, we investigate the sharp upper and lower bounds for the difference of these initial inverse logarithmic coefficients. 
Section~3.3 is concerned with evaluating the sharp estimate for the second-order Hankel determinant corresponding to the inverse logarithmic coefficients for functions in this class. 
Finally, Section~3.4 deals with establishing the sharp double inequality for the generalized Fekete--Szeg\"{o} functional associated with the class $\mathcal{C}_e$.

\subsection{\bf Sharp Bounds for the Logarithmic Inverse Coefficients in the Class $\mathcal{C}_e$}

\begin{theo}
Let $f \in \mathcal{C}_e$, and let the logarithmic inverse coefficients $\Gamma_n$ $(n = 1,2,3)$ be defined as in \eqref{IG1}. Then
\[
|\Gamma_n| \le \frac{1}{2n(n+1)}, \;\text{for}\; n=1,2,3.
\]
Furthermore, each of the above inequalities is sharp.
\end{theo}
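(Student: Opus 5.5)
By rotation invariance of $\mathcal{C}_e$ (if $f\in\mathcal{C}_e$ then so is $e^{-i\theta}f(e^{i\theta}z)$, which multiplies $\Gamma_n$ by $e^{in\theta}$), I would work with the Carath\'eodory representation. If $f\in\mathcal{C}_e$, then $1+zf''(z)/f'(z)=e^{\omega(z)}$ with $\omega\in\Omega$. Writing $\omega=(p-1)/(p+1)$ with $p\in\mathcal{P}$ as in \eqref{p1}, we get
\[
\omega(z)=\tfrac{c_1}{2}z+\Big(\tfrac{c_2}{2}-\tfrac{c_1^2}{4}\Big)z^2+\Big(\tfrac{c_3}{2}-\tfrac{c_1c_2}{2}+\tfrac{c_1^3}{8}\Big)z^3+\cdots .
\]
On the other side,
\[
1+\frac{zf''}{f'}=1+2a_2z+(6a_3-4a_2^2)z^2+(12a_4-18a_2a_3+8a_2^3)z^3+\cdots .
\]
Expanding $e^{\omega}=1+\omega+\omega^2/2+\omega^3/6+\cdots$ and comparing coefficients gives
\[
a_2=\frac{c_1}{4},\qquad a_3=\frac{c_1^2}{48}+\frac{c_2}{12},
\]
and an analogous (longer) polynomial expression for $a_4$ in $c_1,c_2,c_3$. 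I would then insert these into \eqref{IG1}.

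\emph{The cases $n=1,2$.} For $n=1$ we have $\Gamma_1=-c_1/8$, so $|c_1|\le 2$ gives $|\Gamma_1|\le 1/4$. Equality holds for $p=(1+z)/(1-z)$, that is, $\omega(z)=z$. For $n=2$, a short computation gives
\[
\Gamma_2=-\frac{1}{24}\Big(c_2-\frac78c_1^2\Big).
\]
Since $v=7/8\in[0,1]$, Lemma~\ref{L3} yields $|\Gamma_2|\le 2/24=1/12$. Equality holds for $p=(1+z^2)/(1-z^2)$, that is, $\omega(z)=z^2$, so the extremal function satisfies $1+zf''/f'=e^{z^2}$.

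\emph{The case $n=3$.} Here I would write $\Gamma_3=-\frac{1}{48}\big(c_3+\alpha c_1c_2+\beta c_1^3\big)$ for explicit rationals $\alpha,\beta$; the leading factor is forced by testing with $\omega(z)=z^3$. I would then substitute the formulas of Lemma~\ref{L1}, taking $\tau_1=t\in[0,1]$ after a rotation. Applying the triangle inequality together with $|\tau_3|\le1$ gives
\[
|\Gamma_3|\le \frac{1-t^2}{24}\Big(|A(t)+B(t)\tau_2+C(t)\tau_2^2|+1-|\tau_2|^2\Big)
\]
for $t\in[0,1)$. The case $t=1$ is handled separately: then $c_1=2$ and $p=(1+z)/(1-z)$ is unique, and a direct evaluation gives a value at most $1/24$. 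Lemma~\ref{L2} then bounds the bracket by $Y(A,B,C)$.

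The main obstacle is verifying that $(1-t^2)\,Y(A(t),B(t),C(t))\le 1$ on $[0,1)$. This requires identifying which branch of Lemma~\ref{L2} applies (the sign of $AC$ and the size of $|B|$ relative to $2(1-|C|)$) for each range of $t$, and then checking an elementary one-variable inequality in each branch. I expect the branch $1+|A|+B^2/(4(1-|C|))$ to be the relevant one near $t=0$, where the value is exactly $1$. Sharpness for $n=3$ comes from $\omega(z)=z^3$, which corresponds to $p=(1+z^3)/(1-z^3)$, i.e.\ $\tau_1=\tau_2=0$ and $\tau_3=1$.
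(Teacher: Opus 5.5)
Your outline follows the paper's proof step for step. It uses the same expressions for $a_2,a_3,a_4$, the bound $|c_1|\le 2$ for $n=1$, and Lemma~\ref{L3} with $v=7/8$ for $n=2$. For $n=3$ it uses the same reduction via Lemma~\ref{L1} with $\tau_1=t\in[0,1]$ and $|\tau_3|\le 1$, the separate check $|\Gamma_3|=5/288$ at $t=1$, and the same extremal choices $\omega(z)=z^n$. What is missing is exactly the part that proves $|\Gamma_3|\le 1/24$, and the one prediction you make about it is wrong. In your displayed reduction the coefficients are forced (up to a common sign) to be $A=\frac{5t^3}{12(1-t^2)}$, $B=-\frac32 t$, $C=-t$. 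So $AC<0$ for every $t\in(0,1)$, and Lemma~\ref{L2}(ii) applies, not (i). Case (i) governs only at $t=0$, where $A=B=C=0$ and $Y=1$. The expression $1+|A|+\frac{B^2}{4(1-|C|)}$ is still an upper bound for $Y$ when $|B|<2(1-|C|)$, but it gives $(1-t^2)\bigl(1+|A|+\frac{B^2}{4(1-|C|)}\bigr)=1-\frac{7}{16}t^2+\frac{47}{48}t^3$. This exceeds $1$ for $t>21/47$, so following your guess would fail on part of $[0,4/7)$.

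You need to supply the branch analysis, which the paper carries out.
\begin{itemize}
\item Since $-4AC(C^{-2}-1)=\frac53t^2\le\frac94t^2=B^2$, the second branch of Lemma~\ref{L2}(ii) never occurs.
\item The first branch $Y=1-|A|+\frac{B^2}{4(1-|C|)}$ holds exactly for $t<4/7$. It gives $\frac{1-t^2}{24}Y=\frac{7t^3-21t^2+48}{1152}$, which decreases from its value $\frac1{24}$ at $t=0$.
\item For $4/7\le t<1$ you are in $R(A,B,C)$, and the subcase $|A|+|B|-|C|$ is impossible.
\item The subcase $-|A|+|B|+|C|$ holds for $t\le 6/\sqrt{91}$ and gives $\frac{30t-35t^3}{288}\le 0.037$.
\item The square-root subcase, for $t>6/\sqrt{91}$, gives $\frac{(12-7t^2)\sqrt{27-7t^2}}{288\sqrt{20}}\le 0.0353$.
\end{itemize}
With these checks written out, your argument is complete and coincides with the paper's.
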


\begin{proof} 

Since $f \in \mathcal{C}_e$, it satisfies the subordination condition
\begin{equation}\label{pa1}
1+\frac{z f''(z)}{f'(z)} = e^{w(z)}.
\end{equation}
where $w(z)$ is a Schwarz function. By the well-known relationship between Schwarz functions and Carath\'{e}odory functions, we write
\begin{equation}\label{pa2}
    w(z) = \frac{p(z)-1}{p(z)+1}, \quad (z \in \mathbb{D}).
\end{equation}
Using the series representation $p(z)=1+c_1z+c_2z^2+c_3z^3+\cdots$ in (\ref{pa2}), we obtain the expansion for $w(z)$:
\begin{equation}\label{pa3}
    w(z) = \frac{c_1}{2}z + \left(\frac{c_2}{2}-\frac{c_1^2}{4}\right)z^2 + \left(\frac{c_3}{2}-\frac{c_1c_2}{2}+\frac{c_1^3}{8}\right)z^3 + \cdots.
\end{equation}
Exponentiating this series yields
\begin{align*}
    e^{w(z)} &= 1+\frac{c_1}{2}z +\left(\frac{c_2}{2}-\frac{c_1^2}{8}\right)z^2 +\left(\frac{c_1^3}{48}-\frac{c_1c_2}{4}+\frac{c_3}{2}\right)z^3 \\
    &\quad + \left( \frac{c_1^4}{384} +\frac{c_1^2c_2}{16} -\frac{c_1c_3}{4} -\frac{c_2^2}{8} +\frac{c_4}{2} \right)z^4 + \cdots.
\end{align*}
On the left-hand side of \ref{pa1}, using the series expansion of $f(z)$ gives
\begin{align*}
    f'(z) &= 1+2a_2z+3a_3z^2+4a_4z^3+5a_5z^4+\cdots, \\
    f''(z) &= 2a_2+6a_3z+12a_4z^2+20a_5z^3+\cdots.
\end{align*}
A computation of the quotient term leads to
\begin{align*}
    1+\frac{z f''(z)}{f'(z)} &= 1+2a_2 z +\left(6a_3-4a_2^2\right)z^2 +\left(8a_2^3-18a_2a_3+12a_4\right)z^3 \\
    &\quad +\left(-16a_2^4+48a_2^2a_3-32a_2a_4-18a_3^2+20a_5\right)z^4+\cdots.
\end{align*}
Comparing the coefficients of $z, z^2, z^3,$ and $z^4$ between the expansions of $e^{w(z)}$ and $1+z f''(z)/f'(z)$, we obtain the following system of equations:
\begin{align*}
    2a_2 &= \frac{c_1}{2}, \\
    -4a_2^2+6a_3 &= \frac{c_2}{2}-\frac{c_1^2}{8}, \\
    8a_2^3-18a_2a_3+12a_4 &= \frac{c_1^3}{48}-\frac{c_1c_2}{4}+\frac{c_3}{2}, \\
    -16a_2^4+48a_2^2a_3-32a_2a_4-18a_3^2+20a_5 &= \frac{c_1^4}{384} +\frac{c_1^2c_2}{16} -\frac{c_1c_3}{4} -\frac{c_2^2}{8} +\frac{c_4}{2}.
\end{align*}
After some simple calculations we get 
\begin{equation}\label{pa19}
\left.
\begin{aligned}
a_1 &= 1 \\
a_2 &= \frac{c_1}{4} \\
a_3 &= \frac{c_1^2}{48} + \frac{c_2}{12} \\
a_4 &= -\frac{c_1^3}{1152} + \frac{c_1 c_2}{96} + \frac{c_3}{24} \\
a_5 &= \frac{c_1^4}{5760} - \frac{c_1^2 c_2}{480} + \frac{c_1 c_3}{240} + \frac{c_4}{40}
\end{aligned}
\right\}
\end{equation}

Let \( f \in \mathcal{C}_e \).

(i) \textbf{Sharp bound of $\Gamma_1$.}

Using \ref{IG1} and \ref{pa19}, we obtain
\[
|\Gamma_1|
= \left|-\frac{1}{2}a_2\right|
= \frac{1}{8}|c_1|
\le \frac{1}{4}.
\]
\par
The equality in the bound $|\Gamma_1| \le 1/4$ is attained if and only if $|c_1| = 2$. This corresponds to the Carath\'eodory function
\[
p(z)=\frac{1+z}{1-z},
\]
which implies $w(z)=z$. Substituting this into the defining relation (\ref{pa1}), we obtain
\[
1+\frac{z f''(z)}{f'(z)} = e^z.
\]
Hence
\[
\frac{f''(z)}{f'(z)}
=
\frac{e^z-1}{z}.
\]

Integrating, we obtain
\[
\log f'(z)
=
\int_0^z \frac{e^t-1}{t}\,dt.
\]

Therefore
\[
f'(z)
=
\exp\!\left(
\int_0^z \frac{e^t-1}{t}\,dt
\right).
\]

Integrating once more and using the normalization $f(0)=0$, we obtain the extremal function
\begin{equation}\label{f_0}
f_0(z)
=
\int_0^z
\exp\!\left(
\int_0^t \frac{e^s-1}{s}\,ds
\right)dt.
\end{equation}

This function satisfies
\[
1+\frac{z f_0''(z)}{f_0'(z)} = e^z,
\]
and therefore belongs to the class $\mathcal{C}_e$.
\medskip
(ii) \textbf{Sharp bound of $\Gamma_2$.} 
Using \eqref{IG1} and \eqref{pa19}, we obtain
\begin{equation}
\begin{aligned}
|\Gamma_2|
&= \left| -\frac{1}{2} a_3 + \frac{3}{4} a_2^2 \right| \\
&= \frac{1}{24} \left| c_2 - \frac{7}{8} c_1^2 \right| \\
&= \frac{1}{24} \left| c_2 - v c_1^2 \right|,
\end{aligned}
\end{equation}

where \(0 < v = \frac{7}{8} < 1\). Therefore, applying Lemma~\ref{L3}, we deduce that
\[
|\Gamma_2| \leq \frac{1}{12}.
\]
Similarly, this inequality is sharp for the function $f_1$, which is defined as :
\begin{equation}\label{f_1}
f_1(z)
=
\int_0^{z}
\exp\!\left(
\int_0^{t} \frac{e^{s^2}-1}{s}\, ds
\right)
dt
=
z
+
\frac{z^3}{6}
+
\frac{z^5}{20}
+
\frac{z^7}{63}
+
O(z^9).
\end{equation}
For this function, we have $a_3 = 1/6$ , $a_2=0$, yielding $|\Gamma_2| = |-\frac{1}{2} a_3 + \frac{3}{4} a_2^2| = \frac{1}{12}$. Hence the inequality is sharp for the function $f_1$.

\par
(iii) \textbf{Sharp bound of $\Gamma_3$.}
From \eqref{IG1} and \eqref{pa19}, we have
\begin{equation}\label{G3}
\begin{aligned}
|\Gamma_3|
&= \left| -\frac{1}{2} \left( a_4 - 4 a_2 a_3 + \frac{10}{3} a_2^3 \right) \right| \\
&= \left| -\frac{35}{2304} c_1^3 + \frac{7}{192} c_1 c_2 - \frac{c_3}{48} \right| \\
&= \frac{1}{48} \left| -\frac{35}{48} c_1^3 + \frac{7}{4} c_1 c_2 - c_3 \right|.
\end{aligned}
\end{equation}
Now, substituting the values of \(c_1\), \(c_2\), and \(c_3\) from \eqref{c1}--\eqref{c3} into \eqref{G3}, we deduce that
\begin{equation}\label{G11}
|\Gamma_3|
=
\frac{1}{48}
\left|
-\frac{5}{6}\tau_1^3
+
3(1-|\tau_1|^2)\tau_1\tau_2
+
2(1-|\tau_1|^2)\overline{\tau_1}\tau_2^2
-
2(1-\tau_1^2)(1-|\tau_2|^2)\tau_3
\right|.
\end{equation}

Note that for
\[
f_\theta(z) := e^{-i\theta} f(e^{i\theta}z),
\]
where \(\theta \in \mathbb{R}\) and \(f \in \mathcal{S}\), we have
\[
\Gamma_3^{\theta}=e^{3i\theta}\Gamma_3.
\]
Thus, the absolute value \(|\Gamma_3|\) remains invariant under rotations of \(f\). Also, by the properties of \emph{Carath\'eodory functions}, we have \(|c_n|\leq 2\) for \(n = 1,2,\ldots\). We know that the class \(\mathcal{P}\) is invariant under rotations. Therefore, we may assume that \(c_1\in [0,2]\). Hence, \(\tau_1\in [0,1]\).

We discuss two possible cases involving \(\tau_1\).

{\bf Case 1.} Let $\tau_1=1$. Then from (\ref{G11}) we easily obtain
\[|\Gamma_3|=\frac{5}{288}.\]

{\bf Case 2.} Let $0\leq \tau_1<1$. Applying the triangle inequality in (\ref{G11}) and using the fact that $|\tau_1|\leq 1$, we obtain
\begin{equation}\label{G12}
\begin{aligned}
|\Gamma_3|
&\le \frac{1 - \tau_1^2}{24}
\left(
\left|
\frac{5}{12}\frac{\tau_1^3}{1 - \tau_1^2}
- \frac{3}{2}\tau_1 \tau_2
- \tau_1 \tau_2^2
\right|
+ 1 - |\tau_2|^2
\right) \\
&\le \frac{1 - \tau_1^2}{24}
\left(
|A + B \tau_2 + C \tau_2^2|
+ 1 - |\tau_2|^2
\right).
\end{aligned}
\end{equation}

where $A=\frac{5\tau_1^3}{12(1 - \tau_1^2)}>0$, $B=-\frac{3}{2}\tau_1$ and $C=-\tau_1$.\par
Observe that $AC<0$. Hence we can apply case (ii) of Lemma \ref{L2}. Next, we check all the conditions of case (ii).
\medskip
(a)

We consider the sub-case where
\[
-4AC(C^{-2}-1) \le B^2
\quad \text{and} \quad
|B| < 2(1-|C|).
\]

Substituting the values of $A$, $B$, and $C$, the first condition reduces to
\[
\frac{5}{3}\tau_1^2 \le \frac{9}{4}\tau_1^2,
\]
which clearly holds for all $\tau_1 \in [0,1]$.

The second condition $|B| < 2(1-|C|)$ yields
\[
\frac{3}{2}\tau_1 < 2(1-\tau_1),
\]
which implies
\[
\tau_1 < \frac{4}{7}.
\]

Thus, for $\tau_1 \in [0,4/7)$, the lemma gives
\[
Y(A,B,C)
=
1 - |A| + \frac{B^2}{4(1-|C|)}.
\]
Consequently, the corresponding bound for $|\Gamma_3|$ becomes
\[
\Phi(\tau_1)
=
\frac{1-\tau_1^2}{24}
\left(
1
-
\frac{5\tau_1^3}{12(1-\tau_1^2)}
+
\frac{\frac{9}{4}\tau_1^2}{4(1-\tau_1)}
\right).
\]

After algebraic simplification, we obtain
\[
\Phi(\tau_1)
=
\frac{7\tau_1^3 - 21\tau_1^2 + 48}{1152}.
\]

Differentiating with respect to $\tau_1$, we obtain
\[
\Phi'(\tau_1)
=
\frac{21\tau_1^2 - 42\tau_1}{1152}
=
\frac{21\tau_1(\tau_1 - 2)}{1152}.
\]

Since $\Phi'(\tau_1) \le 0$ for $\tau_1 \in [0,4/7)$, the function $\Phi$ is strictly decreasing on this interval. Therefore, the maximum value is attained at $\tau_1 = 0$, and we have
\[ |\Gamma_3| \leq
\max_{\tau_1 \in [0,4/7)} \Phi(\tau_1)
=
\Phi(0)
=
\frac{48}{1152}
=
\frac{1}{24}\simeq 0.04166.
\]
\medskip
(b) We now examine the sub-case where
\[
Y(A,B,C)
=
1 + |A| + \frac{B^2}{4(1+|C|)}.
\]
By Lemma~\ref{L2}, this form is valid only if
\[
B^2
<
\min\left\{
4(1+|C|)^2,\,
-4AC\left(C^{-2}-1\right)
\right\},
\]
which in particular requires
\[
B^2
<
-4AC\left(C^{-2}-1\right).
\]

Substituting
\[
A=\frac{5\tau_1^3}{12(1-\tau_1^2)},
\qquad
B=-\frac{3}{2}\tau_1,
\qquad
C=-\tau_1,
\]
we obtain
\[
\frac{9}{4}\tau_1^2
<
\frac{5}{3}\tau_1^2,
\]
which is impossible for $\tau_1 \in (0,1]$. 

Consequently, this branch of the lemma does not produce a maximum in the interior of the domain. Hence, the sharp bound must be attained in one of the remaining cases.

\medskip
(c)
We next consider the sub-case where
\[
Y(A,B,C)=|A|+|B|-|C|.
\]
By Lemma~\ref{L2}, this form applies only if
\[
|C|\bigl(|B|+4|A|\bigr)\le |AB|.
\]
Substituting values of $A$, $B$ and $C$
We obtain
\[
\tau_1 
\left(
\left| -\frac{3}{2}\tau_1 \right|
+
4\left| \frac{5\tau_1^3}{12(1-\tau_1^2)} \right|
\right)
\le
\left|
\left( \frac{5\tau_1^3}{12(1-\tau_1^2)} \right)
\left( -\frac{3}{2}\tau_1 \right)
\right|.
\]

Simplifying the terms inside the inequality gives
\[
\tau_1
\left(
\frac{3}{2}\tau_1
+
\frac{5\tau_1^3}{3(1-\tau_1^2)}
\right)
\le
\frac{5\tau_1^4}{8(1-\tau_1^2)}.
\]

Hence,
\[
\frac{3}{2}\tau_1^2
+
\frac{5\tau_1^4}{3(1-\tau_1^2)}
\le
\frac{5\tau_1^4}{8(1-\tau_1^2)}.
\]

Dividing both sides by $\tau_1^2$ (for $\tau_1>0$), we obtain
\[
\frac{3}{2}
+
\frac{5\tau_1^2}{3(1-\tau_1^2)}
\le
\frac{5\tau_1^2}{8(1-\tau_1^2)}.
\]

To determine whether this condition can hold, let
\[
u=\frac{\tau_1^2}{1-\tau_1^2}.
\]
Since $\tau_1 \in (0,1)$, we have $u>0$. The inequality becomes
\[
\frac{3}{2}
+
\frac{5}{3}u
\le
\frac{5}{8}u.
\]

Rearranging,
\[
\frac{3}{2}
\le
\left(
\frac{5}{8}-\frac{5}{3}
\right)u
=
-\frac{25}{24}u.
\]

Since $u>0$, the right-hand side is strictly negative, while the left-hand side is strictly positive. Therefore, the inequality cannot be satisfied for any $\tau_1 \in (0,1)$. 

Hence, this sub-case is inadmissible and does not contribute to the upper bound.
\medskip
(d)
We consider the sub-case
\[
|AB|\le |C|\bigl(|B|-4|A|\bigr).
\]
Substituting the values of $A$, $B$, and $C$, we obtain
\[
\left| \left( \frac{5\tau_1^3}{12(1-\tau_1^2)} \right) 
\left( -\frac{3}{2}\tau_1 \right) \right|
\leq 
|-\tau_1|
\left( 
\left| -\frac{3}{2}\tau_1 \right| 
- 4\left| \frac{5\tau_1^3}{12(1-\tau_1^2)} \right|
\right).
\]

Assuming $\tau_1 > 0$ and simplifying, the above inequality reduces to
\[
\frac{5\tau_1^2}{8(1-\tau_1^2)}
\leq 
\frac{3}{2} - \frac{5\tau_1^2}{3(1-\tau_1^2)}.
\]
This yields
\[
\frac{\tau_1^2}{1-\tau_1^2} \leq \frac{36}{55},
\]
which implies
\[
55\tau_1^2 \leq 36(1-\tau_1^2).
\]
Hence,
\[
91\tau_1^2 \leq 36,
\quad \text{so that} \quad
\tau_1 \leq \sqrt{\frac{36}{91}} \approx 0.6288.
\]

On the other hand, the ``otherwise'' case of Lemma~\ref{L2} requires
\[
|B| \geq 2(1-|C|),
\]
which is equivalent to
\[
\tau_1 \geq \frac{4}{7} \approx 0.5714.
\]
Therefore, the present sub-case is valid for
\[
\tau_1 \in 
\left[ \frac{4}{7}, \sqrt{\frac{36}{91}} \right].
\]

In this interval, the functional is given by
\[
\Psi(\tau_1)
=
\frac{1-\tau_1^2}{24}
\left( -|A| + |B| + |C| \right)
=
\frac{1-\tau_1^2}{24}
\left(
-\frac{5\tau_1^3}{12(1-\tau_1^2)}
+ \frac{3}{2}\tau_1
+ \tau_1
\right).
\]

A straightforward simplification gives
\[
\Psi(\tau_1)
=
\frac{30\tau_1 - 35\tau_1^3}{288}.
\]

To determine the maximum on the interval 
$\left[ \frac{4}{7}, \sqrt{\frac{36}{91}} \right]$,
we compute
\[
\Psi'(\tau_1)
=
\frac{30 - 105\tau_1^2}{288}.
\]
The critical point occurs at
\[
\tau_1 = \sqrt{\frac{30}{105}}
= \sqrt{\frac{2}{7}}
\approx 0.5345.
\]
Since this point lies to the left of the interval under consideration and
\[
\Psi'(\tau_1) < 0 
\quad \text{for all } 
\tau_1 > \sqrt{\frac{2}{7}},
\]
it follows that $\Psi(\tau_1)$ is strictly decreasing on the interval of interest. Hence, the maximum is attained at the left endpoint $\tau_1 = \frac{4}{7}$.

A direct computation gives
\[
\Psi\!\left(\frac{4}{7}\right)
=
\frac{30(4/7) - 35(4/7)^3}{288}
=
\frac{520}{14112}
\approx 0.0368.
\]
Thus, the maximum value of the functional in this sub-case is approximately $0.0368$, attained at $\tau_1 = \frac{4}{7}$ and $ |\Gamma_3| \leq 0.0368$.
\medskip
(e) This branch is valid for the interval
\[
\tau_1 \in \left( \sqrt{\frac{36}{91}},\, 1 \right)
\approx (0.6288, 1).
\]

In this region, Lemma~\ref{L2} gives
\[
R(A,B,C)
=
(|C| + |A|)
\sqrt{1 - \frac{B^2}{4AC}}.
\]

Substituting the values of $A$, $B$, and $C$, we obtain
\[
|C| + |A|
=
\tau_1
+
\frac{5\tau_1^3}{12(1-\tau_1^2)}
=
\frac{12\tau_1 - 12\tau_1^3 + 5\tau_1^3}
{12(1-\tau_1^2)}
=
\frac{12\tau_1 - 7\tau_1^3}
{12(1-\tau_1^2)}.
\]

Further,
\[
\frac{B^2}{4AC}
=
\frac{\left(-\frac{3}{2}\tau_1\right)^2}
{4\left(\frac{5\tau_1^3}{12(1-\tau_1^2)}\right)(-\tau_1)}
=
\frac{\frac{9}{4}\tau_1^2}
{-\frac{5\tau_1^4}{3(1-\tau_1^2)}}
=
-\frac{27(1-\tau_1^2)}{20\tau_1^2}.
\]

Hence, the term under the square root becomes
\[
\sqrt{
1 - \left(
-\frac{27(1-\tau_1^2)}{20\tau_1^2}
\right)
}
=
\sqrt{
1 + \frac{27 - 27\tau_1^2}{20\tau_1^2}
}
=
\sqrt{
\frac{20\tau_1^2 + 27 - 27\tau_1^2}
{20\tau_1^2}
}
=
\frac{\sqrt{27 - 7\tau_1^2}}
{\sqrt{20}\,\tau_1}.
\]
Substituting into the bound for $|\Gamma_3|$, we obtain
\[
\Omega(\tau_1)
=
\frac{1-\tau_1^2}{24}
\left(
\frac{12\tau_1 - 7\tau_1^3}
{12(1-\tau_1^2)}
\cdot
\frac{\sqrt{27 - 7\tau_1^2}}
{\sqrt{20}\,\tau_1}
\right).
\]

After simplification, this reduces to
\[
\Omega(\tau_1)
=
\frac{(12\tau_1 - 7\tau_1^3)
\sqrt{27 - 7\tau_1^2}}
{288\sqrt{20}\,\tau_1}
=
\frac{(12 - 7\tau_1^2)
\sqrt{27 - 7\tau_1^2}}
{288\sqrt{20}}.
\]

We now analyze the function $\Omega(\tau_1)$ on the interval
\[
\tau_1 \in \left( \sqrt{\frac{36}{91}},\,1 \right).
\]

Let
\[
t=\tau_1^2.
\]
Then
\[
\Omega(\tau_1)
=
\frac{g(t)}{288\sqrt{20}},
\]
where
\[
g(t)=(12-7t)\sqrt{27-7t}.
\]

Differentiating $g(t)$ with respect to $t$, we obtain
\[
g'(t)
=
-7\sqrt{27-7t}
-\frac{7(12-7t)}{2\sqrt{27-7t}}.
\]

Since both terms on the right-hand side are negative for
$t\in(0,1)$, it follows that
\[
g'(t)<0.
\]
Hence $g(t)$, and therefore $\Omega(\tau_1)$, is strictly decreasing on the interval under consideration.

Consequently, the maximum value of $\Omega(\tau_1)$ is attained at the left endpoint
\[
\tau_1=\sqrt{\frac{36}{91}}
\approx 0.6288.
\]

A direct computation yields
\[
\Omega(0.6288)
\approx
\frac{(12-7(0.3956))
\sqrt{27-7(0.3956)}}
{288\sqrt{20}}
\approx
0.0352.
\]

Therefore, for
\[
\tau_1 \in \left( \sqrt{\frac{36}{91}},\,1 \right),
\]
we obtain
\[
|\Gamma_3|
\leq
0.0352.
\]
\par
Hence, the global maximum over all $\tau_1 \in [0,1)$ is given by
\[
|\Gamma_3| \leq \frac{1}{24}.
\]
\par
Similarly, this inequality is sharp for the function $f_2$, which is defined as :
\begin{equation}\label{f_2}
f_2(z)
=
\int_0^{z}
\exp\!\left(
\int_0^{t} \frac{e^{s^3}-1}{s}\, ds
\right)
dt
=
 z + \frac{1}{12}z^4 + \frac{5}{252}z^7 + O(z^{10}).
\end{equation}
For this function, we have $a_2 =a_3=0$ , $a_4=1/12$, yielding $|\Gamma_3| = \frac{1}{24}$.
 Hence the inequality is sharp for the function $f_2$.

\end{proof}

\subsection{\bf Bounds for the Differences of Logarithmic Inverse Coefficients in the Class $\mathcal{C}_e$}

In 1985, de Branges~\cite{LDB1} proved the celebrated Bieberbach conjecture by establishing that for a function 
$f \in \mathcal{S}$ of the form~\eqref{eq1}, the coefficient estimate $|a_n| \leq n$ holds for all $n \geq 2$, 
with equality only for the Koebe function
\[
k(z) := \frac{z}{(1-z)^2}
\]
and its rotations.

This breakthrough result naturally raised the question of whether the inequality
\[
\bigl||a_{n+1}| - |a_n|\bigr| \leq 1, \quad n \geq 2,
\]
holds for every function in $\mathcal{S}$. 
The problem was first examined by Goluzin~\cite{G1} in relation to the Bieberbach conjecture. 
Subsequently, in 1963, Hayman~\cite{H} showed that
\[
\bigl||a_{n+1}| - |a_n|\bigr| \leq A,
\]
for all $f \in \mathcal{S}$, where $A \geq 1$ is an absolute constant. 
The best bound currently known is $A = 3.61$, obtained by Grinspan~\cite{G2}. 
However, the sharp estimate is known only in the case $n=2$ (see~\cite[Theorem~3.11]{PLD1}), namely,
\[
-1 \leq |a_3| - |a_2| \leq 1.029\ldots
\]

For the starlike class $\mathcal{S}^*$, Pommerenke~\cite{Pommerenke1971} conjectured that
\[
\bigl||a_{n+1}| - |a_n|\bigr| \leq 1, \quad n \geq 2,
\]
a conjecture that was later confirmed by Leung~\cite{Leung1978}. 
In the case of convex functions, Li and Sugawa~\cite{LiSugawa} investigated the sharp upper bound of 
$|a_{n+1}| - |a_n|$ for $n \geq 2$ and derived sharp lower bounds for the cases $n=2$ and $n=3$. 
Several additional contributions in this direction may be found in 
\cite{Peng2019, Arora2019, Arora2022, Arora2023}.

\medskip

More recently, in 2023, Lecko and Partyka~\cite{Lecko2023} applied the Loewner method to obtain sharp upper and lower bounds for $|\gamma_2| - |\gamma_1|$ for functions in $\mathcal{S}$. 
Their method was later simplified by Obradovi\'{c} and Tuneski~\cite{Obradovic2024}. 
Furthermore, Kumar and Cho~\cite{Kumar2023} established sharp bounds for $|\gamma_2| - |\gamma_1|$ within certain subclasses of $\mathcal{S}$.

\medskip

Motivated by these developments, we now determine the sharp upper and lower bounds for the quantity $|\Gamma_2| - |\Gamma_1|$ for functions belonging to the class $\mathcal{C}_e$.

\begin{theo}
Let $f \in \mathcal{C}_e$, and let $\Gamma_n$ $(n = 1,2,3)$ be defined by~\eqref{IG1}. Then
\[
-\frac{1}{2\sqrt{7}} \leq |\Gamma_2| - |\Gamma_1| \leq \frac{1}{12}.
\]
Both inequalities are sharp.
\end{theo}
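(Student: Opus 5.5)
The plan is to reduce both inequalities to Lemma~\ref{L6}, using the coefficient formulas \eqref{pa19} already obtained for $\mathcal{C}_e$. From \eqref{IG1} and \eqref{pa19} we have $\Gamma_1=-c_1/8$ and $\Gamma_2=\frac{1}{24}\bigl(c_2-\frac78 c_1^2\bigr)$. Hence
\[
|\Gamma_2|-|\Gamma_1| = \Bigl|-\tfrac{7}{192}c_1^2+\tfrac{1}{24}c_2\Bigr| - \Bigl|\tfrac18 c_1\Bigr| = \Phi(c_1,c_2),
\]
with $K=-\frac{7}{192}$, $L=\frac1{24}$ and $J=\frac18$. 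The quantities entering Lemma~\ref{L6} are $|2K+L|=\frac1{32}$, $|L|+J=\frac16$ and $M=|4K+2L|=\frac1{16}$.

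\textbf{Upper bound.} Since $\frac1{32}<\frac16$, the ``otherwise'' branch of the first estimate in Lemma~\ref{L6} applies. It gives $\Phi\le 2|L|=\frac1{12}$. For sharpness I would use $f_1$ from \eqref{f_1}: there $a_2=0$ and $a_3=\frac16$, so $\Gamma_1=0$ and $|\Gamma_2|=\frac1{12}$.

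\textbf{Lower bound.} I would use the second estimate of Lemma~\ref{L6}, aiming at its middle branch, which is where the stated constant comes from:
\[
2J\sqrt{\frac{2|L|}{M+2|L|}} = \frac14\sqrt{\frac{1/12}{7/48}} = \frac14\cdot\frac{2}{\sqrt7} = \frac{1}{2\sqrt7}.
\]
The branch is determined by comparing $J^2=\frac1{64}=\frac{9}{576}$ with $2|L|(M+2|L|)=\frac1{12}\cdot\frac7{48}=\frac{7}{576}$. The extremal function would then come from the $\tau$-parametrisation of Lemma~\ref{L1}. Take $c_1=2\tau$ with $\tau\in[0,1]$, $c_2=2\tau^2+2(1-\tau^2)\zeta$ and $\zeta=1$. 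Then
\[
|\Gamma_2|-|\Gamma_1|=\tfrac1{24}\bigl|-\tfrac32\tau^2+2(1-\tau^2)\zeta\bigr|-\tfrac{\tau}{4},
\]
and I would minimise this over $\tau$, realising $p$ through the two-parameter Carath\'eodory function of Lemma~\ref{L1} and then integrating as in \eqref{f_0}.

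\textbf{Main obstacle: the branch check, which I expect to fail.} The inequality $\frac{9}{576}\le\frac{7}{576}$ is false, so Lemma~\ref{L6} actually places us in the third branch. That branch gives $2|L|+J^2/(M+2|L|)=\frac1{12}+\frac3{28}=\frac4{21}$, which is larger than $\frac1{2\sqrt7}\approx0.18898$.

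The direct minimisation confirms this. For $\tau^2\ge\frac47$, the best choice $\zeta=1$ gives
\[
\tfrac1{24}\bigl(\tfrac72\tau^2-2\bigr)-\tfrac{\tau}{4}.
\]
At $\tau=\frac67$ this equals $\frac1{42}-\frac3{14}=-\frac4{21}<-\frac1{2\sqrt7}$. The corresponding data $c_1=\frac{12}{7}$, $c_2=2\tau^2+2(1-\tau^2)$ come from a genuine $p\in\mathcal P$, so they give a genuine $f\in\mathcal C_e$.

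So when carrying out the plan, the decisive step is to recheck this branch condition and the value at $\tau=\frac67$ carefully. If they stand, the lower inequality $-\frac1{2\sqrt7}\le|\Gamma_2|-|\Gamma_1|$ cannot be proved as stated. The argument would then establish the upper bound $\frac1{12}$ together with the sharp lower bound $-\frac4{21}$, attained at $\tau=\frac67$, $\zeta=1$.
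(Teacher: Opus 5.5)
Your branch check is correct. The failure is in the paper's proof, not in your plan.

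The paper follows exactly your route. It writes $|\Gamma_2|-|\Gamma_1|=\frac{1}{192}\Phi(c_1,c_2)$ with $K=7$, $L=-8$, $J=24$, $M=12$, which are your parameters multiplied by $192$. It obtains the upper bound $\frac1{12}$ from the ``otherwise'' branch of Lemma~\ref{L6}, with $f_1$ as extremal, just as you do. For the lower bound it invokes the middle branch of the second estimate by asserting $J^2=576<2|L|(M+2|L|)=960$. In fact $2|L|(M+2|L|)=16\cdot 28=448<576$, which is your comparison $\frac{9}{576}>\frac{7}{576}$ rescaled by $192^2$. Since also $J=24<28=M+2|L|$, only the third branch applies. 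It gives $-\Phi\le 16+\frac{576}{28}=\frac{256}{7}$, that is, $|\Gamma_2|-|\Gamma_1|\ge-\frac{256}{7\cdot 192}=-\frac{4}{21}$.

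Your counterexample is genuine. Taking $\tau_1=\frac67$, $\tau_2=1$ in Lemma~\ref{L1} gives $p(z)=\frac{1+\frac{12}{7}z+z^2}{1-z^2}=\frac{13}{14}\,\frac{1+z}{1-z}+\frac{1}{14}\,\frac{1-z}{1+z}\in\mathcal{P}$, equivalently $w(z)=z\,\frac{z+6/7}{1+6z/7}$. The resulting $f$ is convex because $\Re e^{w}>0$, and it lies in $\mathcal{C}_e$ with $a_2=\frac37$, $a_3=\frac{67}{294}$, $\Gamma_1=-\frac{3}{14}$, $\Gamma_2=\frac{1}{42}$. Hence $|\Gamma_2|-|\Gamma_1|=-\frac{4}{21}<-\frac{1}{2\sqrt7}$.

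The paper's $f_3$ belongs to the same family $w(z)=z\,\frac{z+a}{1+az}$, with $a=\frac{2}{\sqrt7}$. Along this family $|\Gamma_2|-|\Gamma_1|=\frac{|7a^2-4|}{48}-\frac{a}{4}$. The value $a=\frac{2}{\sqrt7}$ is only the point where $\Gamma_2$ vanishes; the minimum occurs at $a=\frac67$.

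So the stated lower inequality is false and cannot be proved. What your argument actually proves is the sharp inequality $-\frac{4}{21}\le|\Gamma_2|-|\Gamma_1|\le\frac{1}{12}$.

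When you write it up, your direct minimisation gives a proof of the lower bound independent of Lemma~\ref{L6}, once two points are stated explicitly:
\begin{itemize}
\item For fixed $\tau$, the minimum of $\bigl|-\frac32\tau^2+2(1-\tau^2)\zeta\bigr|$ over $|\zeta|\le 1$ equals $\max\{0,\frac72\tau^2-2\}$. Hence the range $\tau^2\le\frac47$ only contributes values $\ge-\frac{1}{2\sqrt7}$, and $\zeta=1$ is optimal for $\tau^2\ge\frac47$.
\item Rotation invariance justifies taking $c_1=2\tau\ge 0$.
\end{itemize}
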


\begin{proof}
Let $f\in \mathcal{C}_e$. From (\ref{IG1}) and (\ref{pa19}), we get 
\begin{align}\label{IGe1}
|\Gamma_2| - |\Gamma_1| &=\frac{1}{24}\bigg| c_2-\frac{7}{8}c_1^2 \bigg|-\frac{1}{8}|c_1|\nonumber\\
 &=\frac{1}{192}\big(| 7 c_1^2 - 8 c_2| - 24|c_1|\big) \nonumber\\
&= \frac{1}{192} \Phi(c_1, c_2),
\end{align}
where 
\[
\Phi(c_1, c_2) = |K c_1^2 + L c_2| - |J c_1|,
\]
with \(K = 7\), \(L = -8\), and \(J = 24\). In addition, we have $M = |4K + 2L| = |28- 16| = 12.$
We note that \(|2K + L| = 6\) and \(|L| + J = 32\), implying that $|2K + L| \not\geq |L| + J.$ 
Moreover, $J^2 = 576 < 2|L|(M + 2|L|) = 960,$ so the condition \(J^2 < 2|L|(M + 2|L|)\) holds. By Lemma \ref{L6}, it follows that
\[
\Phi(c_1, c_2) \leq 2|L| = 16 \quad \text{and} \quad -\Phi(c_1, c_2) \leq 2J \sqrt{\frac{2|L|}{M + 2|L|}} =\frac{96}{\sqrt {7}}.
\]

Consequently, from (\ref{IGe1}), we obtain the sharp inequality
\[
-\frac{1}{2\sqrt{7} } \leq |\Gamma_2| - |\Gamma_1| \leq \frac{1}{12}.
\]
\par
The sharpness of the obtained bounds for $|\Gamma_2| - |\Gamma_1|$ is demonstrated by the following extremal functions.

\begin{enumerate}

\item \textbf{Upper bound $(1/12)$.}

The upper estimate is sharp. Equality is attained for the function $f_1$ defined by (\ref{f_1}). Here a direct computation gives
\[
|\Gamma_2| - |\Gamma_1| = \frac{1}{12},
\]
which confirms the sharpness of the upper bound.

\item \textbf{Lower bound $\left(-\frac{1}{2\sqrt{7}}\right)$.}

The lower bound is also sharp. Equality is attained for the function $f_3 \in \mathcal{C}_e$ defined by
\begin{equation}\label{f3}
f_3(z)
=
\int_0^{z}
\exp\!\left(
\int_0^{t} \frac{e^{w(s)}-1}{s}\, ds
\right)
dt,
\qquad
\text{where}
\qquad
w(z)= \frac{z(2+\sqrt{7}\,z)}{\sqrt{7}+2z}.
\end{equation}
The Maclaurin expansion of $f_3$ is given by
\[
f_3(z)
=
z
+
\frac{1}{\sqrt{7}} z^2
+
\frac{3}{14} z^3
+
O(z^4).
\]
Substituting these coefficients into the expression for $|\Gamma_2| - |\Gamma_1|$ yields
\[
|\Gamma_2| - |\Gamma_1|
=
-\frac{1}{2\sqrt{7}},
\]
thereby establishing the sharpness of the lower estimate.

\end{enumerate}

\end{proof}

\subsection{\bf Hankel Determinants}

Kowalczyk and Lecko~\cite{12} recently initiated the study of Hankel determinants whose entries are the logarithmic coefficients of functions $f \in \mathcal{S}$. 
In addition, the Hankel determinant $H_{q,n}\!\left(F_{f^{-1}} / 2\right)$, introduced in~\cite{A1}, is defined in terms of the logarithmic coefficients of the inverse function $f^{-1}$ for $f \in \mathcal{S}$. 
It is given by
\[
H_{q,n}\!\left(F_{f^{-1}} / 2\right)
=
\left|
\begin{array}{cccc}
\Gamma_n & \Gamma_{n+1} & \cdots & \Gamma_{n+q-1} \\
\Gamma_{n+1} & \Gamma_{n+2} & \cdots & \Gamma_{n+q} \\
\vdots & \vdots & \ddots & \vdots \\
\Gamma_{n+q-1} & \Gamma_{n+q} & \cdots & \Gamma_{n+2(q-1)}
\end{array}
\right|.
\]

\medskip

In this subsection, we establish sharp bounds for the second-order Hankel determinant associated with the logarithmic inverse coefficients for functions belonging to the class $\mathcal{C}_e$.

\begin{theo}
Let $f \in \mathcal{C}_e$ be of the form~\eqref{eq1}. Then
\[
\left| H_{2,1}\!\left(F_{f^{-1}} / 2\right) \right|
\le \frac{85}{12096}
\approx 0.007027.
\]
The bound is sharp.
\end{theo}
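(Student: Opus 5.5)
The plan is to express $H_{2,1}(F_{f^{-1}}/2)=\Gamma_1\Gamma_3-\Gamma_2^2$ through the Carath\'eodory coefficients and then maximize with Lemma~\ref{L1} and Lemma~\ref{L2}, following the scheme used for $\Gamma_3$. From \eqref{IG1} and \eqref{pa19} we already have $\Gamma_1=-c_1/8$, $\Gamma_2=-\tfrac1{24}\bigl(c_2-\tfrac78c_1^2\bigr)$ and $\Gamma_3=-\tfrac{35}{2304}c_1^3+\tfrac{7}{192}c_1c_2-\tfrac{c_3}{48}$. Multiplying out, I expect
\[
H_{2,1}\!\left(F_{f^{-1}}/2\right)=\frac{1}{36864}\Bigl(21c_1^4-56c_1^2c_2+96c_1c_3-64c_2^2\Bigr).
\]
The functional is invariant under rotations: $\Gamma_n$ picks up the factor $e^{in\theta}$, so $H_{2,1}$ picks up $e^{4i\theta}$. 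Hence we may assume $c_1=2\tau_1$ with $\tau_1=t\in[0,1]$, and then substitute \eqref{c1}--\eqref{c3}.

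The two boundary values of $t$ are treated first. For $t=1$ we have $c_1=c_2=c_3=2$, which gives the value $(336-448+768-256)/36864=400/36864$; I will recompute this carefully. For $t=0$ we have $c_1=0$, so $|H|=|c_2|^2/576\le 4/576=1/144\approx0.00694$. This is strictly less than $85/12096$, so the extremum must occur at an interior $t$.

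For $0<t<1$, the $\tau_3$-term is $96c_1\cdot 2(1-t^2)(1-|\tau_2|^2)\tau_3=384t(1-t^2)(1-|\tau_2|^2)\tau_3$. Collecting everything else as a quadratic in $\tau_2$ and applying the triangle inequality in $\tau_3$ gives
\[
|H|\le \frac{384\,t(1-t^2)}{36864}\Bigl(|A+B\tau_2+C\tau_2^2|+1-|\tau_2|^2\Bigr).
\]
Here $A,B,C$ are real functions of $t$. $A$ comes from the pure $t$-part $\bigl(21\cdot16-56\cdot8+96\cdot16-64\cdot4\bigr)t^4$ corrected by the $-2(1-t^2)\bar\tau_1\tau_2^2$ and $4(1-t^2)\tau_1\tau_2$ pieces of $c_3$, and divided by $384t(1-t^2)$. $C$ contains the term $-256(1-t^2)^2\tau_2^2/(384t(1-t^2))$, of size roughly $(1-t^2)/t$. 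I will then determine the sign of $AC$ and run through the subcases of Lemma~\ref{L2}, as was done for $\Gamma_3$. Each subcase yields an explicit one-variable function of $t$ on a subinterval, which I maximize by calculus.

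The main obstacle is this case analysis. $|C|$ exceeds $1$ for small $t$, so the conditions $|B|<2(1\pm|C|)$ and the comparison with $-4AC(C^{-2}-1)$ switch on several subintervals, which I need to locate exactly. I expect the maximum $85/12096$ to come from one of the rational branches, presumably $1+|A|+B^2/(4(1-|C|))$ or $1-|A|+B^2/(4(1-|C|))$, at an interior critical point $t_0$. If instead the maximum lies on the boundary between branches, I would check both adjacent formulas. For sharpness, take $\tau_1=t_0$, let $\tau_2$ be the maximizing point from Lemma~\ref{L2}, and choose $\tau_3\in\mathbb T$ to align phases. If the maximizing $\tau_2$ lies on $\mathbb T$, use the corresponding $p$ from Lemma~\ref{L1} instead. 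Then set $w=(p-1)/(p+1)$ and define the extremal function as in \eqref{f3}, checking numerically that its coefficients give $|H_{2,1}|=85/12096$.
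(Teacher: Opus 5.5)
Your route is the paper's route: the same expression $H=\frac{1}{36864}(21c_1^4-56c_1^2c_2+96c_1c_3-64c_2^2)$, rotation to $\tau_1=t\in[0,1]$, Lemma~\ref{L1}, the triangle inequality in $\tau_3$, and then Lemma~\ref{L2}. As written, though, the plan has errors that would derail it if carried out literally.

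\textbf{The $c_1c_3$ term.} You mis-scale the contribution of $96c_1c_3$ twice.
\begin{itemize}
\item At $t=1$ it is $96\cdot2\cdot2=384$, not $768$, so $H=16/36864=1/2304$. Your value $400/36864\approx0.0109$ would exceed $85/12096$ and contradict the theorem.
\item In the pure $t^4$ part it is $96\cdot 2t\cdot 2t^3=384t^4$, not $96\cdot16\,t^4$. The total $t^4$ coefficient is therefore $336-448+384-256=16$.
\item The pieces $4(1-t^2)\tau_1\tau_2$ and $-2(1-t^2)\bar\tau_1\tau_2^2$ of $c_3$ belong to $B$ and $C$, not to $A$.
\end{itemize}
The correct reduction is
\[
36864\,H=16t^4-192t^2(1-t^2)\tau_2-128(1-t^2)(t^2+2)\tau_2^2+384\,t(1-t^2)(1-|\tau_2|^2)\tau_3,
\]
so that $A=\frac{t^3}{24(1-t^2)}$, $B=-\frac t2$ and $C=-\frac{t^2+2}{3t}$, exactly as in the paper.

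\textbf{Which branch of Lemma~\ref{L2} applies.} Your expectation here is wrong. Since $|C|-1=\frac{(1-t)(2-t)}{3t}>0$ on all of $(0,1)$, you have $AC<0$, $2(1-|C|)<0$ and $-4AC(C^{-2}-1)<0$. Hence the two rational branches are never active, and there is no switching of those conditions. Lemma~\ref{L2} returns $R(A,B,C)$ throughout.
\begin{itemize}
\item The sub-branch $|A|+|B|-|C|$ never occurs; its condition would require $19t^4+8t^2\ge48$.
\item The sub-branch $-|A|+|B|+|C|$ holds if and only if $35t^4+40t^2\le48$, i.e. $t\le\tau_0\approx0.855$. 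It gives $|H|\le(16+4t^2-21t^4)/2304$, which peaks at $t^2=2/21$ with value $85/12096$.
\item On $(\tau_0,1)$ the remaining sub-branch gives at most about $0.0033$.
\end{itemize}

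\textbf{Sharpness.} The maximizer is the boundary point $\tau_2=1$. There $|A+B+C|=|B|+|C|-A$ and the $\tau_3$-term vanishes. The extremal data are therefore $w(z)=z\,\frac{t_0+z}{1+t_0z}$ with $t_0=\sqrt{2/21}$, i.e. $c_1=c_3=2t_0$ and $c_2=2$. These give $H=(21t_0^4-4t_0^2-16)/2304=-85/12096$. Your recipe of taking the Lemma~\ref{L2} maximizer is the right one. The paper's displayed extremal uses $w(z)=z\,\frac{t_0-z}{1-t_0z}$, i.e. $\tau_2=-1$, which yields only $(16-20t_0^2+3t_0^4)/2304\approx0.0061$. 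So follow your recipe, not that formula.
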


\begin{proof} Since  $f \in \mathcal{C}_e$. Then from (\ref{IG1}) and (\ref{pa19}) we get 
\begin{equation}\label{e2.1}
\begin{aligned}
H_{2,1}\!\left(F_{f^{-1}} / 2\right)
&= \frac{7}{12288}c_1^4
-\frac{7}{4608}c_1^2c_2
+\frac{1}{384}c_1c_3
-\frac{1}{576}c_2^2. 
\end{aligned}
\end{equation}

By Lemma \ref{L1} and (\ref{e2.1}) a simple computation shows that

\begin{equation}\label{e2.2}
H_{2,1}(F_{f^{-1}} / 2) = \frac{\tau_1^4}{2304} - \frac{\tau_1^2(1-|\tau_1|^2)\tau_2}{192} - \frac{(1-|\tau_1|^2)(|\tau_1|^2+2)\tau_2^2}{288} + \frac{\tau_1(1 - \tau_1^2)(1 - |\tau_2|^2)\tau_3}{96}. \end{equation}

Since the class  $ \mathcal{P} $  and  $ H_{2,1}(F_{f^{-1}}/2) $  are invariant under the rotation, we may assume that  $ c_1\in [0,2] $ (see \cite[Theorem 3]{G}), that is in view of (\ref{p1}),  
$\tau_1\in [0,1] $.\par
 Now we divide the following three possible cases:\par
 {\bf Case 1.} Suppose that  $\tau_1=1$. Then from (\ref{e2.2}), we easily get
 \[\big|H_{2,1}(F_{f^{-1}}/2)\big|=\big|\Gamma_1\Gamma_3 - \Gamma_2^2\big| \leq \frac{1}{2304} \approx 0.00043 .\]
 
 {\bf Case 2.} Suppose that  $\tau_1=0$. Then from (\ref{e2.2}), we easily get
 \[\big|H_{2,1}(F_{f^{-1}}/2)\big|=\big|\Gamma_1\Gamma_3 - \Gamma_2^2\big| \leq \frac{1}{144} \approx 0.00694 .\]
 
 {\bf Case 3.} Suppose that  $\tau_1\in (0,1)$. Then from (\ref{e2.2}) we  get
 
 \begin{equation}\label{e2.3}
\big| H_{2,1}(F_{f^{-1}} / 2) \big|
\le \frac{\tau_1 (1 - \tau_1^2)}{96}
\left(
|A + B \tau_2 + C \tau_2^2|
+ 1 - |\tau_2|^2
\right).
\end{equation}

where $A = \frac{\tau_1^3}{24(1-\tau_1^2)}$, $B=-\frac{1}{2}\tau_1$ and $C=-\frac{2+\tau_1^2}{3\tau_1}$. 
For $\tau_1 \in (0, 1)$, we observe that $A > 0$ and $C < 0$, which implies $AC < 0$, satisfying the initial condition for Case (ii) of Lemma \ref{L2}. 
\medskip
\begin{enumerate}
\item[(a)] We consider the subcase
\begin{equation}\label{subcase}
Y(A,B,C)=1-|A|+\frac{B^2}{4(1-|C|)}, 
\qquad \text{provided } |B|<2(1-|C|).
\end{equation}

Since
\[
|C|=\frac{\tau_1^2+2}{3\tau_1}=:g(\tau_1),
\]
we compute
\[
g'(\tau_1)=\frac{1}{3}\!\left(1-\frac{2}{\tau_1^2}\right)<0
\quad \text{for } \tau_1\in(0,1).
\]
Thus $g$ is strictly decreasing on $(0,1)$ and
\[
\lim_{\tau_1\to1^-} g(\tau_1)=1,
\]
which implies $|C|>1$ for all $\tau_1\in(0,1)$. Hence $1-|C|<0$ on $(0,1)$.

Since $|B|=\frac{\tau_1}{2}>0$, the condition $|B|<2(1-|C|)$ becomes
\[
\frac{\tau_1}{2}
<2\!\left(1-\frac{\tau_1^2+2}{3\tau_1}\right),
\]
which is equivalent to
\[
7\tau_1^2-12\tau_1+8<0.
\]
However, the discriminant of this quadratic is
\[
\Delta=144-224=-80<0,
\]
so $7\tau_1^2-12\tau_1+8>0$ for all $\tau_1\in\mathbb{R}$. 
Therefore, the required inequality is never satisfied for $\tau_1\in(0,1)$, and this subcase cannot occur.

\medskip

\item[(b)] 
We next examine the second subcase of Lemma \ref{L2}(ii) to determine whether it provides a valid upper bound for $\tau_1 \in (0,1)$. In this case,
\begin{equation}
Y(A,B,C)=1+|A|+\frac{B^2}{4(1+|C|)},
\end{equation}
provided that
\begin{equation}\label{db}
B^2 < \min\left\{4(1+|C|)^2,\; \Phi(A,C)\right\},
\end{equation}
where $\Phi(A,C)=-4AC(C^{-2}-1)$.
A direct computation gives
\[
-4AC=\frac{\tau_1^2(\tau_1^2+2)}{18(1-\tau_1^2)},
\qquad
C^{-2}-1=\frac{(1-\tau_1^2)(4-\tau_1^2)}{(\tau_1^2+2)^2}.
\]
Hence $\Phi(A,C)
=\frac{\tau_1^2(4-\tau_1^2)}{18(\tau_1^2+2)}.$

The condition $B^2<\Phi(A,C)$ becomes $\frac{\tau_1^2}{4}
<
\frac{\tau_1^2(4-\tau_1^2)}{18(\tau_1^2+2)}.$

Since $\tau_1\in(0,1)$, division by $\tau_1^2>0$ yields
$\frac{1}{4}
<
\frac{4-\tau_1^2}{18(\tau_1^2+2)}.$
After simplification, this inequality reduces to $11\tau_1^2+10<0,$
which is impossible for real $\tau_1$. 
Therefore, the condition (\ref{db}) cannot be satisfied for any $\tau_1\in(0,1)$, and this subcase does not yield an admissible bound.

\medskip

\item[(c)] 
We examine the first subcase of $R(A,B,C)$ in Lemma (\ref{L2}), namely $R(A,B,C)=|A|+|B|-|C|,$
which holds provided
\begin{equation}\label{db1}
|C|(|B|+4|A|)\le |AB|.
\end{equation}
A direct computation gives
\[
|C|(|B|+4|A|)
=
\frac{\tau_1^2+2}{3\tau_1}
\left(
\frac{\tau_1}{2}
+
\frac{\tau_1^3}{6(1-\tau_1^2)}
\right)
=
\frac{(\tau_1^2+2)(3-2\tau_1^2)}{18(1-\tau_1^2)},
\]
and
$|AB|=\frac{\tau_1^4}{48(1-\tau_1^2)}.$

Substituting into (\ref{db1}) yields
$
\frac{(\tau_1^2+2)(3-2\tau_1^2)}{18(1-\tau_1^2)}
\le
\frac{\tau_1^4}{48(1-\tau_1^2)}.$
Since $1-\tau_1^2>0$ for $\tau_1\in(0,1)$, multiplying by $144(1-\tau_1^2)$ gives
$8(\tau_1^2+2)(3-2\tau_1^2)\le 3\tau_1^4,$
which simplifies to
$19\tau_1^4+8\tau_1^2-48\ge 0.$
Let $h(\tau_1)=19\tau_1^4+8\tau_1^2-48$. 
For $\tau_1\in(0,1)$,
$h(\tau_1)\le h(1)=19+8-48=-21<0.$
Hence $h(\tau_1)<0$ on $(0,1)$, and condition \ref{db1} is never satisfied.
Therefore, this branch of $R(A,B,C)$ is not admissible for determining the upper bound of the determinant.

\medskip

\item[(d)] We now consider the second subcase of $R(A,B,C)$ in Lemma \ref{L2},
\[
R(A,B,C)=-|A|+|B|+|C|,
\]
which holds provided
\begin{equation}\label{db3}
|AB|\le |C|(|B|-4|A|).
\end{equation}
Substituting the values of $A,B,C$ we get
\[
35\tau_1^4+40\tau_1^2-48\le0.
\]
Let $k(\tau_1)=35\tau_1^4+40\tau_1^2-48$. 
The unique positive root of $k(\tau_1)=0$ is
\[
\tau_0=\sqrt{\frac{-20+4\sqrt{130}}{35}}\approx0.8554,
\]
and hence the condition holds for $\tau_1\in(0,\tau_0]$.

In this interval,
\[
|H_{2,1}(F_{f^{-1}}/2)|
\le
\frac{\tau_1(1-\tau_1^2)}{96}
\left(
-|A|+|B|+|C|
\right)
=
\frac{-21\tau_1^4+4\tau_1^2+16}{2304}
=: \Phi(\tau_1).
\]
Writing $x=\tau_1^2$, we obtain
\[
\Phi(x)=\frac{-21x^2+4x+16}{2304},
\]
which is a concave quadratic in $x$. Its maximum occurs at 
$x=\frac{2}{21}$, yielding
\[
\max_{(0,\tau_0]} \Phi(\tau_1)
=
\Phi\!\left(\sqrt{\frac{2}{21}}\right)
=
\frac{340}{48384}
\approx0.007027.
\]
Since $\Phi(0)=1/144<0.007027$, the sharp bound is attained at
\[
\tau_1=\sqrt{\frac{2}{21}}.
\]

 \medskip
 \item[(e)] 
For the remaining interval $\tau_1\in(\tau_0,1)$, where 
\[
\tau_0=\sqrt{\frac{-20+4\sqrt{130}}{35}}\approx0.8554,
\]
the functional $R(A,B,C)$ is determined by the ``otherwise'' branch of Lemma~\ref{L2}:
\[
R(A,B,C)=(|C|+|A|)\sqrt{1-\frac{B^2}{4AC}}.
\]
Substituting the expressions for $A,B,$ and $C$ yields
\[
\sqrt{1-\frac{B^2}{4AC}}
=
\sqrt{1+\frac{9(1-\tau_1^2)}{2(\tau_1^2+2)}}
=
\sqrt{\frac{13-7\tau_1^2}{2(\tau_1^2+2)}}.
\]
Hence the corresponding bound for the Hankel determinant is
\[
\Psi(\tau_1)
=
\frac{\tau_1(1-\tau_1^2)}{96}
\left(
\frac{\tau_1^2+2}{3\tau_1}
+
\frac{\tau_1^3}{24(1-\tau_1^2)}
\right)
\sqrt{\frac{13-7\tau_1^2}{2(\tau_1^2+2)}}.
\]
After simplification, we may write $\Psi(\tau_1)$ as a product
\[
\Psi(\tau_1)=P(\tau_1)\,Q(\tau_1),
\]
where
\[
P(\tau_1)=\frac{-7\tau_1^4+8\tau_1^2+16}{2304},
\qquad
Q(\tau_1)=\sqrt{\frac{13-7\tau_1^2}{2(\tau_1^2+2)}}.
\]

For $\tau_1\in(\tau_0,1)$, both factors are positive. A direct derivative computation shows
\[
P'(\tau_1)
=
\frac{-28\tau_1^3+16\tau_1}{2304}
=
\frac{4\tau_1(-7\tau_1^2+4)}{2304}<0,
\]
since $\tau_1^2>\tau_0^2>\frac{4}{7}$. 
Moreover, differentiating the radicand of $Q$ gives
\[
\frac{d}{d\tau_1}
\left(
\frac{13-7\tau_1^2}{2(\tau_1^2+2)}
\right)
=
-\frac{54\tau_1}{(2\tau_1^2+4)^2}<0,
\]
so $Q'(\tau_1)<0$ on $(\tau_0,1)$. 
Hence $\Psi(\tau_1)$ is strictly decreasing on $(\tau_0,1)$.
Therefore derivative test shows that both $P(\tau_1)$ and $Q(\tau_1)$ are strictly decreasing functions on the interval $(\tau_0, 1)$. Specifically, the radicand of $Q(\tau_1)$ has a derivative proportional to $-54/(2\tau_1^2+4)^2$, which is strictly negative. Thus, $\Psi(\tau_1)$ is strictly decreasing on $(\tau_0, 1)$.
Because $\Psi(\tau_1)$ is the product of two positive, strictly decreasing functions, it follows that $\Psi'(\tau_1) < 0$ throughout the interval. Thus, $\Psi(\tau_1)$ has no local extrema in $(\tau_0, 1)$ and decreases toward its boundary value:
\begin{equation}
\lim_{\tau_1 \to 1^-} \Psi(\tau_1) = \frac{1}{2304} \approx 0.000434.
\end{equation}
\medskip
Comparing these results with the local maximum obtained in the previous interval, we find that:
\begin{equation}
\max_{(\tau_0, 1)} \Psi(\tau_1) < \max_{(0, \tau_0]} \Phi(\tau_1) = \frac{85}{12096} \approx 0.007027.
\end{equation}
\end{enumerate}
\subsection*{Sharpness and the Extremal Function}

The bound
\[
|H_{2,1}(F_{f^{-1}}/2)|
\le
\frac{85}{12096}
\]
is sharp. Indeed, there exists an extremal function $f_4 \in \mathcal{C}_e$ for which equality is attained.

The extremal function $f_4$ is determined by
\[
f_4'(z)
=
\exp\!\left(
\int_0^z
\frac{
\exp\!\left(
\eta\frac{\sqrt{2/21}-\eta}{1-\sqrt{2/21}\,\eta}
\right)
-1
}{\eta}
\,d\eta
\right).
\]

Substitution of the corresponding coefficients into the expression for 
$H_{2,1}(F_{f^{-1}}/2)$ yields
\[
|H_{2,1}(F_{f_4^{-1}}/2)|
=
\frac{85}{12096},
\]
thereby establishing the sharpness of the bound.
\end{proof}

\subsection{{\bf  Generalized Fekete-Szeg\"{o} functional for the class $\mathcal{C}_e$.}}

In 2024, Lecko and Partyka \cite{Lecko2024} investigated the generalized Fekete-Szeg\"{o} functional for the class $\mathcal{S}$ defined by  
\[
F_{\lambda, \mu}(f) := \big| a_3(f) - \lambda a_2(f)^2 \big| - \mu |a_2(f)|,
\]
where $\lambda \in \mathbb{C}$ and $\mu > 0$.  
The coefficients $a_2(f) = a_2$ and $a_3(f) = a_3$ are given by (1).  
Hence, we can write
\begin{equation}\label{FG}
F_{\lambda, \mu}(f) = \big| a_3 - \lambda a_2^2 \big| - \mu |a_2|, 
\qquad \lambda \in \mathbb{C}, \ \mu > 0. 
\end{equation}
In 2025, Bulboac\u{a} \textit{et al.} \cite{Bulboaca2025} studied the generalized Fekete-Szeg\"{o} functional for the entire class of univalent normalized functions $\mathcal{S}$, as well as for its subclass of convex functions $\mathcal{K}$.

In this section, we establish sharp upper and lower bounds of  $F_{\lambda, \mu}(f)$ for the class $\mathcal{C}_e$. \par

\begin{theo}
Let $\lambda \in \mathbb{C}$ and $\mu > 0$. Suppose that $f(z) \in \mathcal{C}_e$ is defined by \eqref{eq1}. Then the generalized Fekete--Szeg\"{o} functional
\[
F_{\lambda,\mu}(f) := |a_3-\lambda a_2^2| -\mu|a_2|
\]
satisfies the sharp double inequality
\begin{equation}\label{ce_fg1}
\mathcal{B}_L \le F_{\lambda,\mu}(f) \le \mathcal{B}_U,
\end{equation}
where the sharp upper bound $\mathcal{B}_U$ is given by
\[
\mathcal{B}_U =
\begin{cases}
\dfrac16,
&
\text{if } \; |1-\lambda| < \dfrac23+2\mu,
\\[12pt]
\dfrac14|1-\lambda| -\dfrac12\mu,
&
\text{if } \; |1-\lambda| \ge \dfrac23+2\mu,
\end{cases}
\]
and the sharp lower bound $\mathcal{B}_L$ is given by
\[
\mathcal{B}_L =
\begin{cases}
\dfrac14|1-\lambda| -\dfrac12\mu,
&
\text{if } \; |1-\lambda| \le \mu-\dfrac23,
\\[14pt]
-\dfrac12\mu \sqrt{ \dfrac{2}{3|1-\lambda|+2} },
&
\text{if } \; |1-\lambda| \ge \dfrac{9\mu^2-4}{6},
\\[16pt]
-\dfrac16 - \dfrac{3\mu^2}{4(3|1-\lambda|+2)},
&
\text{if } \; \mu-\dfrac23 < |1-\lambda| < \dfrac{9\mu^2-4}{6}.
\end{cases}
\]
Moreover, the intermediate parameter region
\[
\mu-\frac23 < |1-\lambda| < \frac{9\mu^2-4}{6}
\]
is nonempty if and only if $\mu>\frac23$. All the estimates are sharp.
\end{theo}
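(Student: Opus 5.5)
The plan is to reduce $F_{\lambda,\mu}(f)$ to the functional $\Phi(c_1,c_2)$ of Lemma~\ref{L6} and then read off both bounds. From \eqref{pa19} we have $a_2=c_1/4$ and $a_3=c_1^2/48+c_2/12$. Hence
\[
a_3-\lambda a_2^2=\frac1{12}\Bigl(c_2+\frac{1-3\lambda}{4}c_1^2\Bigr),
\qquad
\mu|a_2|=\frac1{12}|3\mu c_1|,
\]
so that
\[
F_{\lambda,\mu}(f)=\frac1{12}\Phi(c_1,c_2),\qquad K=\frac{1-3\lambda}{4},\quad L=1,\quad J=3\mu .
\]
This is legitimate in Lemma~\ref{L6}, since $K\in\mathbb C$, $L\in\mathbb R$ and $J\ge0$. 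The key quantities are
\[
M=|4K+2L|=|3-3\lambda|=3|1-\lambda|,\qquad |2K+L|=\tfrac32|1-\lambda|.
\]

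For the upper bound, the condition $|2K+L|\ge |L|+J$ reads $\tfrac32|1-\lambda|\ge 1+3\mu$, that is, $|1-\lambda|\ge \tfrac23+2\mu$. In that case Lemma~\ref{L6} gives
\[
\frac{M-2J}{12}=\frac14|1-\lambda|-\frac12\mu .
\]
Otherwise it gives $2|L|/12=1/6$. This is $\mathcal B_U$.

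For the lower bound I use $F=-\frac1{12}(-\Phi)$ and translate the three cases of Lemma~\ref{L6}:
\begin{itemize}
\item $J\ge M+2|L|$ becomes $3\mu\ge 3|1-\lambda|+2$, i.e.\ $|1-\lambda|\le\mu-\tfrac23$. The bound is $-(2J-M)/12=\tfrac14|1-\lambda|-\tfrac12\mu$.
\item $J^2\le 2|L|(M+2|L|)$ becomes $9\mu^2\le 6|1-\lambda|+4$. The bound is $-\tfrac{1}{12}\,2J\sqrt{2/(M+2)}=-\tfrac12\mu\sqrt{2/(3|1-\lambda|+2)}$.
\item In the remaining case the bound is $-\tfrac1{12}\bigl(2+9\mu^2/(3|1-\lambda|+2)\bigr)=-\tfrac16-\tfrac{3\mu^2}{4(3|1-\lambda|+2)}$.
\end{itemize}
The middle region is nonempty iff $\mu-\tfrac23<\tfrac{9\mu^2-4}{6}$. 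This is equivalent to $6\mu<9\mu^2$, i.e.\ $\mu>\tfrac23$ because $\mu>0$.

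For sharpness, the first two regimes are straightforward. For $\mathcal B_U$ in the case $|1-\lambda|\ge\tfrac23+2\mu$, and for the first lower case, I take $f_0$ from \eqref{f_0}, which has $p=(1+z)/(1-z)$. Then $c_1=c_2=2$, so $|a_2|=1/2$ and $a_3-\lambda a_2^2=\tfrac14(1-\lambda)$, giving $\tfrac14|1-\lambda|-\tfrac12\mu$ (up to a rotation making the phase irrelevant). For the value $1/6$ I use $f_1$ from \eqref{f_1}, where $a_2=0$ and $a_3=1/6$.

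The main obstacle is sharpness of the two nontrivial lower bounds, since Lemma~\ref{L6} as quoted gives no extremal functions. I would construct them through Lemma~\ref{L1}. First rotate so that $K c_1^2+Lc_2$ can be made small, and take $c_1=2\tau_1$ with $\tau_1\in[0,1]$ and $c_2=2\tau_1^2+2(1-\tau_1^2)\tau_2$ with $|\tau_2|=1$. Choose the direction of $\tau_2$ opposite to $(2K+1)\tau_1^2$, which is possible after a rotation of $f$ that rotates $1-\lambda$. Then $|Kc_1^2+c_2|=2\bigl|(3/2)|1-\lambda|\tau_1^2-(1-\tau_1^2)\bigr|$.

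Minimizing $|Kc_1^2+c_2|-6\mu\tau_1$ over $\tau_1$ splits into two cases:
\begin{itemize}
\item the zero of the modulus, $\tau_1^2=2/(3|1-\lambda|+2)$, which gives the square-root bound;
\item the interior critical point on the branch where the modulus equals $(1-\tau_1^2)-\tfrac32|1-\lambda|\tau_1^2$, namely $\tau_1=3\mu/(3|1-\lambda|+2)<1$ in the middle region, which gives $-\tfrac16-\tfrac{3\mu^2}{4(3|1-\lambda|+2)}$.
\end{itemize}
With these $\tau_1,\tau_2$, the Carath\'eodory function $p$ from Lemma~\ref{L1} gives $w=(p-1)/(p+1)$, and then
\[
f(z)=\int_0^z\exp\Bigl(\int_0^t\frac{e^{w(s)}-1}{s}\,ds\Bigr)dt\in\mathcal C_e
\]
attains equality, in the same manner as $f_3$ in \eqref{f3}. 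I expect the only delicate point to be checking that the optimizing $\tau_1$ lies in $[0,1]$ exactly in the stated parameter ranges.
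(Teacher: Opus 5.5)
Your proposal is correct and follows the paper's proof. It uses the same reduction $F_{\lambda,\mu}(f)=\frac1{12}\Phi(c_1,c_2)$ with $K=\frac{1-3\lambda}{4}$, $L=1$, $J=3\mu$, the same case-by-case reading of Lemma~\ref{L6}, and extremal functions built from Lemma~\ref{L1}. Your $\tau_2=-(1-\lambda)/|1-\lambda|$ is actually the choice that works: the paper's $\tau_2=(1-\lambda)/|1-\lambda|$ maximizes rather than annihilates $|Kc_1^2+c_2|$. No rotation of $f$ is needed for this, since $\tau_2\in\mathbb{T}$ is free in Lemma~\ref{L1}.

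One slip to fix concerns the critical point $\tau_1=3\mu/(3|1-\lambda|+2)$. It lies on the branch $\frac32|1-\lambda|\tau_1^2\ge 1-\tau_1^2$, which is guaranteed by $|1-\lambda|<\frac{9\mu^2-4}{6}$. On that branch $\Phi=(3|1-\lambda|+2)\tau_1^2-2-6\mu\tau_1$. On the branch you named, $\Phi$ is strictly decreasing in $\tau_1$ and has no interior critical point.
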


\begin{proof}
Let $f\in\mathcal{C}_e$. Substituting the expressions from \eqref{pa19} into the target functional $F_{\lambda,\mu}(f)$, we deduce that
\begin{align}
F_{\lambda,\mu}(f)
&= \left| \frac{c_2}{12} + \frac{c_1^2}{48} - \lambda\left(\frac{c_1}{4}\right)^2 \right| - \mu\left| \frac{c_1}{4} \right| \nonumber \\
&= \frac1{12} \left( \left| c_2 + \frac{1-3\lambda}{4}c_1^2 \right| - 3\mu|c_1| \right) \nonumber \\
&= \frac1{12}\Phi(c_1,c_2), \label{ce_fe2}
\end{align}
where $\Phi(c_1,c_2) = |Kc_1^2+Lc_2| -|Jc_1|$, with parameters defined by
\[
K=\frac{1-3\lambda}{4}, \qquad L=1, \qquad J=3\mu.
\]
The auxiliary parameters required by Lemma~\ref{L6} can be computed as:
\[
M = |4K+2L| = \left| 4\left(\frac{1-3\lambda}{4}\right)+2 \right| = |3-3\lambda| = 3|1-\lambda|,
\]
and
\[
|2K+L| = \left| 2\left(\frac{1-3\lambda}{4}\right)+1 \right| = \frac32|1-\lambda|.
\]

\medskip
\noindent
{\bf Upper Bound Estimates.} \\
The critical indicator threshold $|2K+L|\ge |L|+J$ from Lemma~\ref{L6} translates directly to
\[
\frac32|1-\lambda| \ge 1+3\mu,
\]
which is equivalent to the condition
\[
|1-\lambda| \ge \frac23+2\mu.
\]
If $|1-\lambda| < \frac23+2\mu$, Lemma~\ref{L6} dictates that $\Phi(c_1,c_2)\le 2|L|=2$. Therefore, from \eqref{ce_fe2}, we have
\[
F_{\lambda,\mu}(f) \le \frac{2}{12} = \frac16.
\]
Conversely, if $|1-\lambda| \ge \frac23+2\mu$, Lemma~\ref{L6} yields $\Phi(c_1,c_2) \le |4K+2L|-2J = 3|1-\lambda|-6\mu$. Hence, from \eqref{ce_fe2}, we get
\[
F_{\lambda,\mu}(f) \le \frac{3|1-\lambda|-6\mu}{12} = \frac14|1-\lambda| -\frac12\mu.
\]

\medskip
\noindent
{\bf Lower Bound Estimates.}
\begin{enumerate}
\item[(i)] Suppose that $J \ge M+2|L|$. This inequality reduces to $3\mu \ge 3|1-\lambda|+2$, which is structurally equivalent to
\[
|1-\lambda| \le \mu-\frac23.
\]
By applying the first branch of the minimization properties in Lemma~\ref{L6}, we obtain $-\Phi(c_1,c_2) \le 2J-M = 6\mu-3|1-\lambda|$. Reversing the operational signs and dividing by the global scalar factor yields
\[
F_{\lambda,\mu}(f) \ge \frac{3|1-\lambda|-6\mu}{12} = \frac14|1-\lambda| -\frac12\mu.
\]

\item[(ii)] Suppose that $J^2 \le 2|L|(M+2|L|)$. This inequality reduces to $9\mu^2 \le 2(3|1-\lambda|+2)$, which is structurally equivalent to
\[
|1-\lambda| \ge \frac{9\mu^2-4}{6}.
\]
Applying the corresponding branch of Lemma~\ref{L6}, we obtain
\[
-\Phi(c_1,c_2) \le 2J \sqrt{ \frac{2|L|}{M+2|L|} } = 6\mu \sqrt{ \frac{2}{3|1-\lambda|+2} }.
\]
Thus, relation \eqref{ce_fe2} directly produces
\[
F_{\lambda,\mu}(f) \ge -\frac12\mu \sqrt{ \frac{2}{3|1-\lambda|+2} }.
\]

\item[(iii)] For the remaining intermediate parameter region where $\mu-\frac23 < |1-\lambda| < \frac{9\mu^2-4}{6}$, Lemma~\ref{L6} gives
\[
-\Phi(c_1,c_2) \le 2|L| + \frac{J^2}{M+2|L|} = 2+\frac{9\mu^2}{3|1-\lambda|+2}.
\]
Consequently, from \eqref{ce_fe2}, it follows that
\[
F_{\lambda,\mu}(f) \ge -\frac1{12} \left( 2+\frac{9\mu^2}{3|1-\lambda|+2} \right) = -\frac16 - \frac{3\mu^2}{4(3|1-\lambda|+2)}.
\]
\end{enumerate}
We observe that the intermediate region is nonempty if and only if $\mu - \frac{2}{3} < \frac{9\mu^2-4}{6}$, which simplifies to $6\mu - 4 < 9\mu^2 - 4$, or $3\mu(3\mu - 2) > 0$. Since $\mu > 0$, this establishes that the region is valid exclusively when $\mu > \frac{2}{3}$.

\medskip
\noindent
{\bf Sharpness Proof via Extremal Constructs.} \\
The sharpness of each estimate is verified by mapping onto specific analytic functions $f \in \mathcal{C}_e$ generated by the extremal Carath\'{e}odory functions of Lemma~\ref{L6}.
\begin{enumerate}
\item The invariant boundary value $\mathcal{B}_U = \frac{1}{6}$ is attained when $c_1 = 0$ and $c_2 = 2$. This matches the $2$-fold symmetric Carath\'{e}odory function $p_0(z) = (1+z^2)/(1-z^2) = 1+2z^2+2z^4+\cdots$, corresponding to $\omega_0(z)=z^2$. Substituting $\omega_0(z)$ into \eqref{maineqn} and integrating with $f(0)=0$, $f'(0)=1$ produces the extremal function:
\[
f_0(z) = \int_0^z \exp\left( \int_0^t \frac{e^{s^2}-1}{s}\,ds \right) dt = z+\frac16z^3+\frac1{20}z^5+\cdots.
\]
Here, $a_2=0$ and $a_3=\frac{1}{6}$, checking perfectly to $|a_3-\lambda a_2^2|-\mu|a_2| = \frac{1}{6}$.

\item The linear constraint layout $\frac{1}{4}|1-\lambda| - \frac{1}{2}\mu$ is reached when $|c_1|=2$ and $c_2=2$, which aligns with the classic function $p_1(z)=(1+z)/(1-z) = 1+2z+2z^2+\cdots$ and $\omega_1(z)=z$. Substituting $\omega_1(z)$ into \eqref{maineqn} gives:
\[
f_1(z) = \int_0^z \exp\left( \int_0^t \frac{e^s-1}{s}\,ds \right) dt = z+\frac12z^2+\frac14z^3+\cdots.
\]
Here, $a_2=\frac{1}{2}$ and $a_3=\frac{1}{4}$, checking directly to $\frac{1}{4}|1-\lambda|-\frac{1}{2}\mu$.

\item For the quadratic dominance interval where $|1-\lambda| \ge \frac{9\mu^2-4}{6}$, equality is established by selecting a parameter-dependent Carath\'{e}odory function with parameters $\tau_1 = \sqrt{2/(3|1-\lambda|+2)}$ and $\tau_2 = (1-\lambda)/|1-\lambda|$ mapping to Lemma~\ref{L1}. This configuration satisfies $c_1 = 2\tau_1$ and forces $|K c_1^2 + L c_2| = 0$, directly yielding the bound $-\frac{1}{2}\mu \sqrt{2/(3|1-\lambda|+2)}$ via \eqref{ce_fe2}.

\item In the intermediate interval, the lower bound is sharply realized by utilizing the parameter configurations $\tau_1 = 3\mu/(3|1-\lambda|+2)$ and $\tau_2 = (1-\lambda)/|1-\lambda| $ in Lemma~\ref{L1}. Setting $c_1 = 2\tau_1$ and $c_2 = 2\tau_1^2 + 2(1-\tau_1^2)\tau_2$ optimizes \eqref{ce_fe2} to directly yield $-\frac{1}{6} - \frac{3\mu^2}{4(3|1-\lambda|+2)}$.
\end{enumerate}
The proof is complete.
\end{proof}

\section*{\bf Declarations}

\subsection*{Data Availability}
No datasets were generated or analyzed during the current study; therefore, data sharing is not applicable.

\subsection*{Conflict of Interest}
The authors declare that they have no conflicts of interest regarding the publication of this paper.

\subsection*{Author Contributions}
Both authors contributed equally to the preparation of this manuscript.

\end{document}